\documentclass[11pt]{article}
\def\thetitle{Small subgraphs in the trace of a random walk}

\usepackage{amsmath,amssymb}

\usepackage[usenames,dvipsnames,svgnames,table]{xcolor}
\definecolor{CombinatoricaAqua}{HTML}{00698C}
\definecolor{CombinatoricaBlue}{HTML}{3A3293}
\definecolor{CombinatoricaBrown}{HTML}{66220C}
\definecolor{CombinatoricaRed}{HTML}{DF2A27}
\definecolor{HarvardCrimson}{rgb}{0.6471, 0.1098, 0.1882}

\makeatletter
\let\reftagform@=\tagform@
\def\tagform@#1{\maketag@@@
  {(\ignorespaces\textcolor{CombinatoricaBrown}{#1}\unskip\@@italiccorr)}}
\renewcommand{\eqref}[1]{\textup{\reftagform@{\ref{#1}}}}
\makeatother

\usepackage[backref=page]{hyperref}
\hypersetup{
  unicode,
  pdfencoding=auto,
  pdfauthor={Michael Krivelevich and Peleg Michaeli},
  pdftitle={\thetitle},
  pdfsubject={05C81 (Primary), 05C80, 60G50 (Secondary)},
  pdfkeywords={random walk, small subgraphs},
  colorlinks=true,
  citecolor=CombinatoricaBlue,
  linkcolor=CombinatoricaAqua,
  anchorcolor=CombinatoricaBrown,
  urlcolor=HarvardCrimson}

\usepackage{amsthm}
\usepackage{thmtools}

\usepackage[capitalize]{cleveref}

\makeatletter
\pdfstringdefDisableCommands{\let\cref\@firstofone}
\makeatother

\declaretheoremstyle[
  spaceabove=\topsep, spacebelow=\topsep,
  headfont=\color{CombinatoricaBrown}\normalfont\bfseries,
  bodyfont=\itshape,
]{thm}
\declaretheoremstyle[
  spaceabove=\topsep, spacebelow=\topsep,
  headfont=\color{CombinatoricaBrown}\normalfont\bfseries,
  bodyfont=\normalfont,
]{dfn}
\declaretheoremstyle[
  spaceabove=0.5\topsep, spacebelow=0.5\topsep,
  headfont=\color{CombinatoricaBrown}\normalfont\bfseries,
  bodyfont=\normalfont,
]{rmk}

\declaretheorem[style=thm,parent=section]{theorem}
\declaretheorem[style=thm,sibling=theorem]{lemma}
\declaretheorem[style=thm,sibling=theorem]{corollary}
\declaretheorem[style=thm,sibling=theorem]{claim}
\declaretheorem[style=rmk,sibling=theorem]{remark}
\declaretheorem[style=remark,numbered=no]{note}
\declaretheorem[style=definition,numbered=no]{acknowledgement}

\usepackage[nobysame,msc-links]{amsrefs}

\BibSpec{book}{%
    +{}  {\PrintPrimary}                {transition}
    +{,} { \textbf}                     {title} 
    +{.} { }                            {part}
    +{:} { \textit}                     {subtitle}
    +{,} { \PrintEdition}               {edition}
    +{}  { \PrintEditorsB}              {editor}
    +{,} { \PrintTranslatorsC}          {translator}
    +{,} { \PrintContributions}         {contribution}
    +{,} { }                            {series}
    +{,} { \voltext}                    {volume}
    +{,} { }                            {publisher}
    +{,} { }                            {organization}
    +{,} { }                            {address}
    +{,} { \PrintDateB}                 {date}
    +{,} { }                            {status}
    +{}  { \parenthesize}               {language}
    +{}  { \PrintTranslation}           {translation}
    +{;} { \PrintReprint}               {reprint}
    +{.} { }                            {note}
    +{.} {}                             {transition}
    +{}  {\SentenceSpace \PrintReviews} {review}
}

\makeatletter
\renewcommand{\PrintNames@a}[4]{%
  \PrintSeries{\name}
    {#1}
    {}{ and \set@othername}
    {,}{ \set@othername}
    {}{ and \set@othername}
    {#2}{#4}{#3}%
}
\makeatother

\usepackage{sectsty}
\sectionfont{\color{CombinatoricaBrown}}
\subsectionfont{\color{CombinatoricaBrown}}
\subsubsectionfont{\color{CombinatoricaBrown}}

\usepackage{soul}
\soulregister\ref7


\usepackage[textsize=tiny,backgroundcolor=black!10]{todonotes}
\presetkeys{todonotes}{noline}{}

\usepackage[a4paper]{geometry}
\geometry{
  a4paper,
  left=25.63mm,
  right=25.63mm,
  top=36.25mm,
  bottom=36.25mm
  }

\title{\thetitle}
\author{Michael Krivelevich
\thanks{School of Mathematical Sciences, Raymond and Beverly Sackler Faculty
  of Exact Sciences, Tel Aviv University, Tel Aviv 6997801, Israel. E-mail:
  \href{mailto:krivelev@post.tau.ac.il}{\tt krivelev@post.tau.ac.il}. Research supported in part by a USA-Israel
BSF Grant and by a grant from Israel Science Foundation.}
\and Peleg Michaeli
\thanks{School of Mathematical Sciences, Raymond and Beverly Sackler Faculty
  of Exact Sciences, Tel Aviv University, Tel Aviv 6997801, Israel. E-mail:
  \href{mailto:peleg.michaeli@math.tau.ac.il}
  {\tt peleg.michaeli@math.tau.ac.il}.}}

\def\td{\rho}
\def\md{m_0}
\input{commands.tex}

\begin{document}
\maketitle

\begin{abstract}
  We consider the combinatorial properties of the trace of a random walk on the
  complete graph and on the random graph $\gnp$.  In particular, we study the 
  appearance of a fixed subgraph in the trace.  We prove that for a subgraph 
  containing a cycle, the threshold for its appearance in the trace of a random 
  walk of length $m$ is essentially equal to the threshold for its appearance 
  in the random graph drawn from $\gnm$.  In the case where the base graph is 
  the complete graph, we show that a fixed forest appears in the trace 
  typically much earlier than it appears in $\gnm$.
\end{abstract}

\section{Introduction}\label{introduction}
For a positive integer $n$ and a real $p\in[0,1]$, we denote by $\gnp$ the
probability space of all (simple) labelled graphs on the vertex set
$[n]=\left\{1,\ldots,n\right\}$, where every pair of vertices is connected 
independently with probability $p$.
A closely related model, which we denote by $\gnm$,
is the \emph{uniform} probability space over all graphs on $n$ vertices with
$m$ edges.  Both models have been extensively studied since first introduced
by Gilbert~\cite{Gil59}, and by \erdos{} and \renyi{}~\cites{ER_rgI,ER_evo}.

One of the problems studied in~\cite{ER_evo} was the problem of finding the 
threshold for the appearance of a fixed subgraph.  Formally, given a fixed 
graph $H$, one is interested in the smallest value of $p_0$ such that when 
$p\gg p_0$ the random graph $\gnp$ contains a copy of $H$ \emph{with high 
probability} (\whp{}), that is, with probability tending to $1$ as $n$ grows.
It turns out that the threshold for the appearance of $H$ is determined by 
$\md(H)$, the maximum edge density of all of its non-empty subgraphs.  In 
symbols,
\begin{equation*}
  \md(H) =
  \max\left\{
    \frac{\left|E\left(H'\right)\right|}
    {\left|V\left(H'\right)\right|}
    \midd
    H'\subseteq H,\ \left|V\left(H'\right)\right|>0
  \right\}.
\end{equation*}

The problem of finding the threshold for every fixed subgraph was settled by
\bollobas{}~\cite{Bol81rg} in 1981, and the result can be stated as follows
(see also~\cite{AS4}*{Section 4.4} or~\cite{JLR}*{Theorem 3.4}).

\begin{theorem}\label{gnp:subgraphs}
  Let $H$ be a fixed non-empty graph and let $G\sim\gnp$.  Then,
  \begin{equation*}
    \lim_{n\to\infty}\pr{H\subseteq G} =
    \begin{cases}
      0 & p \ll n^{-1/\md(H)}\\
      1 & p \gg n^{-1/\md(H)}.
    \end{cases}
  \end{equation*}
\end{theorem}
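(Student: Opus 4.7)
The plan is to prove both directions via the moment method applied to the random variable counting copies of a suitable subgraph.

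For the $0$-statement, I would use a first moment argument. Let $H'\subseteq H$ be a non-empty subgraph attaining $\md(H)$, so that $|E(H')|/|V(H')|=\md(H)$. Let $X_{H'}$ denote the number of (labelled) copies of $H'$ in $G\sim\gnp$. The expectation satisfies $\E{X_{H'}}=\Theta(n^{|V(H')|}p^{|E(H')|})$, and when $p\ll n^{-1/\md(H)}=n^{-|V(H')|/|E(H')|}$ this is $o(1)$. By Markov's inequality, $\pr{X_{H'}\ge 1}=o(1)$, and since any copy of $H$ would contain a copy of $H'$, we conclude $\pr{H\subseteq G}=o(1)$.

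For the $1$-statement, I would use the second moment method applied to $X$, the number of labelled copies of $H$ in $G$. Again $\E{X}=\Theta(n^{|V(H)|}p^{|E(H)|})$, which tends to infinity when $p\gg n^{-1/\md(H)}$ because $\md(H)\ge |E(H)|/|V(H)|$. To invoke Chebyshev, I need $\var{X}=o(\E{X}^2)$. Decomposing the variance in the usual way, one writes
\begin{equation*}
  \var{X}\le \sum_{H'}\sum_{\text{pairs of copies of $H$ sharing }H'} \pr{\text{both copies present}},
\end{equation*}
where the outer sum is over non-empty subgraphs $H'\subseteq H$ (up to isomorphism) and only pairs intersecting in a graph isomorphic to $H'$ are counted. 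Each such pair contributes $\Theta(n^{2|V(H)|-|V(H')|}p^{2|E(H)|-|E(H')|})$, so its ratio to $\E{X}^2$ is of order $n^{-|V(H')|}p^{-|E(H')|}$.

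The main step is then to check that each of these terms is $o(1)$ under the assumption $p\gg n^{-1/\md(H)}$. This follows from the defining inequality $|E(H')|/|V(H')|\le \md(H)$, which gives $p^{|E(H')|}\gg n^{-|E(H')|/\md(H)}\ge n^{-|V(H')|}$, hence $n^{-|V(H')|}p^{-|E(H')|}=o(1)$. Summing over the finitely many isomorphism types of $H'$ yields $\var{X}=o(\E{X}^2)$, and Chebyshev's inequality then gives $\pr{X=0}\le \var{X}/\E{X}^2=o(1)$, completing the proof. The only real subtlety is bookkeeping in the variance decomposition to ensure the powers of $n$ and $p$ match up correctly for each overlap pattern; the rest is a direct application of Markov/Chebyshev.
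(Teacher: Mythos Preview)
Your proof is correct and is exactly the standard first/second moment argument for this classical result. However, note that the paper does not actually prove \cref{gnp:subgraphs}: it is stated as background, attributed to \bollobas{}~\cite{Bol81rg}, with pointers to textbook treatments in~\cite{AS4}*{Section~4.4} and~\cite{JLR}*{Theorem~3.4}. What you have written is essentially the proof one finds in those references, so there is no meaningful divergence to discuss.
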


\begin{theorem}\label{gnm:subgraphs}
  Let $H$ be a fixed non-empty graph and let $G\sim G(n,m)$.  Then,
  \begin{equation*}
    \lim_{n\to\infty}\pr{H\subseteq G} =
    \begin{cases}
      0 & m \ll n^{2-1/\md(H)}\\
      1 & m \gg n^{2-1/\md(H)}.
    \end{cases}
  \end{equation*}
\end{theorem}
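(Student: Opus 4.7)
The plan is to deduce Theorem~\ref{gnm:subgraphs} from Theorem~\ref{gnp:subgraphs} via the standard transfer between the $G(n,p)$ and $G(n,m)$ models, which applies because ``$H\subseteq G$'' is a monotone increasing graph property. The key observation is that conditional on $|E(G(n,p))|=k$, one has $G(n,p)\sim G(n,k)$; writing $N=|E(G(n,p))|$ this gives
\[
  \pr{H\subseteq G(n,p)}=\sum_{k\ge 0}\pr{N=k}\pr{H\subseteq G(n,k)},
\]
where the factor $\pr{H\subseteq G(n,k)}$ is non-decreasing in $k$. Both directions of the theorem follow by choosing $p$ so that the mean of $N$ sits just above or just below $m$ and sandwiching this sum using concentration of $N$.

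For the $0$-statement, suppose $m\ll n^{2-1/\md(H)}$; one may assume $m\to\infty$, since otherwise the statement follows from a direct first-moment computation on $G(n,m)$. Set $p=2m/\binom{n}{2}$, so $p\ll n^{-1/\md(H)}$ and Theorem~\ref{gnp:subgraphs} yields $\pr{H\subseteq G(n,p)}=o(1)$. Since $N\sim\bin{\binom{n}{2}}{p}$ has mean $2m$ and variance at most $2m$, Chebyshev's inequality gives $\pr{N\ge m}\ge 1/2$ for $m$ large. Restricting the sum to $k\ge m$ and using monotonicity yields $\pr{H\subseteq G(n,p)}\ge\tfrac{1}{2}\pr{H\subseteq G(n,m)}$, so $\pr{H\subseteq G(n,m)}=o(1)$.

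For the $1$-statement, suppose $m\gg n^{2-1/\md(H)}$ and set $p=m/(2\binom{n}{2})$. Then $p\gg n^{-1/\md(H)}$, so Theorem~\ref{gnp:subgraphs} gives $\pr{H\subseteq G(n,p)}=1-o(1)$; on the other hand $\E{N}=m/2$, and $\pr{N>m}=o(1)$ by Chebyshev. Splitting the sum at $k=m$ and using monotonicity in the opposite direction,
\[
  \pr{H\subseteq G(n,p)}\le\pr{N\le m}\cdot\pr{H\subseteq G(n,m)}+\pr{N>m},
\]
whence $\pr{H\subseteq G(n,m)}\ge\pr{H\subseteq G(n,p)}-o(1)=1-o(1)$. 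There is no real obstacle: this is a textbook model-transfer argument, and the only mild care needed is in choosing the numerical constants in $p$ (here $2$ and $1/2$) so that the concentration of $N$ places it on the desired side of $m$ with probability bounded away from $0$ in one direction and tending to $1$ in the other.
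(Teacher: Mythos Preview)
Your argument is correct: this is the standard monotone-property transfer from $G(n,p)$ to $G(n,m)$, and the details (the choice of $p$ as $2m/\binom{n}{2}$ and $m/(2\binom{n}{2})$, monotonicity of $k\mapsto\pr{H\subseteq G(n,k)}$, and Chebyshev for the edge count) are all in order. One tiny quibble: in the $0$-statement you may also assume $m\to\infty$ simply because $\pr{H\subseteq G(n,m)}$ is monotone in $m$, so there is no need for a separate first-moment computation.

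As for the comparison you were asked to make: the paper does not prove \cref{gnm:subgraphs} at all. It is stated in the introduction as a classical background result, attributed to \bollobas{}~\cite{Bol81rg} (with pointers to~\cite{AS4}*{Section 4.4} and~\cite{JLR}*{Theorem 3.4}), and then used only for context. So there is no in-paper proof to compare against; your derivation from \cref{gnp:subgraphs} is exactly the textbook route those references take.
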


Here and later, the notation $f\gg g$ means that $f/g\to\infty$.
For a vertex $v$, denote by $N(v)$ the set of its neighbours, and let $N^+(v)=
\left\{v\right\}\cup N(v)$.
Given a (finite) base graph $G=(V,E)$, a (lazy) \emph{simple random walk} on 
$G$ is a stochastic process $\left(X_0,X_1,\ldots\right)$ where $X_0$ is 
sampled uniformly at random from $V$, and for $t\ge 0$, $X_{t+1}$ is sampled 
uniformly at random from $N^+\left(X_t\right)$, independently of the past.  The 
\emph{trace} of the random walk at time $t$ is the (random) subgraph 
$\Gamma_t\subseteq G$ on the same vertex set, whose edges consist of all edges 
traversed by the walk by time $t$, excluding loops and suppressing possible 
edge multiplicity.  Formally,
\begin{equation*}
  E\left(\Gamma_t\right)
  = \left\{\left\{X_{s-1},X_s\right\}\mid 0<s\le t,\ X_{s-1}\ne X_s\right\}.
\end{equation*}

\begin{note}
There are various definitions of laziness of random walks, perhaps the most 
common is staying put with probability $1/2$ (see, e.g., \cite{LPW}); however, 
for the case of random walks on the complete graph on $n$ vertices, a random 
walk which stays put with probability $1/n$ yields an independent sequence of 
uniformly distributed locations, which is far easier to handle.  We decided
therefore to adopt here a general definition of laziness which, in the case of 
the complete graph, behaves like that.  However, as the thresholds discussed in 
this work are coarse, the results below can be applied for more traditional 
definitions of laziness, as well as for non-lazy random walks.
\end{note}

In~\cite{BL13} it was shown that the trace of a random walk whose length is 
proportional to $n^2$ on (dense) \emph{quasirandom} graphs (including dense 
random graphs) on $n$ vertices is typically quasirandom.
In~\cite{FKMP}, several results were given concerning
graph-theoretic properties of the trace, for sparser base graphs and shorter
random walks.
In this paper we continue this study of the structure of the trace, finding
thresholds for the appearance of fixed subgraphs.
Our first result, which is analogous to \cref{gnm:subgraphs},
considers the random walk on the random graph $\gnp$, and is
restricted to fixed subgraphs containing a cycle.  As we will see later,
that restriction is necessary, as the statement is simply false for forests.

Note that the condition $\md(H)\ge 1$ is equivalent to the condition of
containing a cycle.

\begin{theorem}\label{gnp:trace:subgraphs}
  Let $H$ be a fixed graph with $\md(H)\ge 1$, let
  $\varepsilon>0$, ${p\ge n^{-1/\md(H)+\varepsilon}}$ and $G\sim\gnp$, and let
  $\Gamma_t$ be the trace of a random walk of length $t$ on $G$.
  Then,
  \begin{equation*}
    \lim_{n\to\infty} \pr{H\subseteq \Gamma_t}
    =\begin{cases}
      0 & t \ll n^{2-1/\md(H)}\\
      1 & t \gg n^{2-1/\md(H)}.
    \end{cases}
  \end{equation*}
\end{theorem}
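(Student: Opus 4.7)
My plan is to prove the $0$-statement by a first moment bound on copies of a densest subgraph $H^{*}\subseteq H$, and the $1$-statement by a second moment (or sprinkling) argument on copies of $H$ itself. Both parts lean on the fact that in the regime $p\ge n^{-1/\md(H)+\varepsilon}$, the host graph $G\sim\gnp$ is \whp{} very dense and pseudorandom: by \cref{gnp:subgraphs} together with well-known spectral/expansion properties of $\gnp$, the lazy walk on $G$ is \whp{} rapidly mixing (mixing time $O(\log n)$) with stationary distribution $\pi(v)=\deg(v)/(2|E(G)|)\approx 1/n$. From these properties I would extract the following quantitative input: for every fixed edge $e\in E(G)$ and every time step $s$ that is $\Omega(\log n)$ past a mixing point, the probability that the walk traverses $e$ at step $s$ is $\Theta(1/|E(G)|)=\Theta(1/(n^{2}p))$.

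\paragraph{$0$-statement.} Let $H^{*}\subseteq H$ be a non-empty subgraph with $|E(H^{*})|/|V(H^{*})|=\md(H)$; since $\{H\subseteq\Gamma_t\}\subseteq\{H^{*}\subseteq\Gamma_t\}$, it suffices to show $\pr{H^{*}\subseteq\Gamma_t}=o(1)$, and by Markov it is enough to bound the expected number $X$ of labelled copies of $H^{*}$ in $\Gamma_t$. For a fixed copy with edge set $F=\{f_1,\dots,f_k\}$, $k=|E(H^{*})|$, I would write
\begin{equation*}
  \pr{F\subseteq\Gamma_t}\le\sum_{\sigma\in S_k}\sum_{0<s_1<\dots<s_k\le t}
  \pr{\text{the walk traverses }f_{\sigma(i)}\text{ at step }s_i\text{ for every }i},
\end{equation*}
and, by iterating the Markov property together with the mixing estimate above (separately handling the negligible contribution from tuples with two consecutive times closer than the mixing threshold), upper-bound each inner term by $p^{k}\cdot(C/(n^{2}p))^{k}=(C/n^{2})^{k}$; the factor $p^{k}$ accounts for all edges of $F$ belonging to $G$ in the first place. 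Summing, $\E{X}=O(n^{|V(H^{*})|}(t/n^{2})^{k})$, which is $o(1)$ precisely when $t\ll n^{\,2-|V(H^{*})|/k}=n^{\,2-1/\md(H)}$.

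\paragraph{$1$-statement.} When $t\gg n^{\,2-1/\md(H)}$ I would apply the second moment method to $Y$, the number of labelled copies of $H$ in $\Gamma_t$: using the matching lower bound on single-step traversal probabilities gives $\E{Y}\to\infty$, and $\E{Y^{2}}$ is handled by partitioning pairs of copies according to the isomorphism type of their overlap (exactly as in the standard proof of \cref{gnm:subgraphs}) and bounding joint traversal probabilities by the same time-tuple sum technique as above. An equivalent route is a sprinkling argument: split the walk into two halves, condition on the (typically well-mixed) state at the midpoint, and show that the edges added in the second half stochastically dominate a uniformly random $\Omega(t)$-subset of $E(G)$, to which \cref{gnm:subgraphs} applies conditionally on $G$.

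\paragraph{Main obstacle.} In both directions the principal technical difficulty is controlling the positive correlations between the edge-visit events at different time steps, which are what make the $\pr{F\subseteq\Gamma_t}$ above fail to factor outright; the cleanest way to decouple them is to chop a given time tuple into blocks separated by $\Omega(\log n)$ and invoke mixing between blocks, absorbing the resulting polylogarithmic loss into the $n^{\varepsilon}$ slack in $p$. The assumption $\md(H)\ge 1$, i.e.\ that $H$ contains a cycle, is crucial for this argument to give the correct threshold: for a forest the walk may traverse an entire tree-copy during a single run of consecutive steps with probability far larger than the above product bound predicts, which is exactly why the statement is literally false for forests (as flagged in the introduction).
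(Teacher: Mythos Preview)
Your overall plan---first moment on a densest subgraph $H^{*}$ for the $0$-statement, second moment on $H$ for the $1$-statement---is exactly the paper's. The gaps lie in the two technical estimates you sketch.

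\textbf{Close-time tuples are not negligible when $\md(H)=1$.} Your per-tuple bound $(C/(n^{2}p))^{k}$ is valid only when the $s_i$ are pairwise separated by more than the mixing time. For a block of consecutive times along which the prescribed edges $f_{\sigma(i)}$ happen to form a trail, the probability is of order $n^{-1}(np)^{-k}$, not $(n^{2}p)^{-k}$; summing over tuples with $r$ such runs contributes order $(np)^{-k}(t/n)^{r}$ to $\pr{F\subseteq\Gamma_t\mid G}$, and when $t<n$ the term with the \emph{smallest} feasible $r$---namely $r=\rho(H^{*})$, the minimum number of edge-disjoint trails covering $H^{*}$---dominates. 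You correctly identify this ``single-run'' phenomenon in your last paragraph as the reason the theorem fails for forests, but it does not disappear for graphs containing a cycle: it merely fails to move the threshold below $n^{2-1/\md}$. Since $\md(H)=1$ forces $t\ll n$ in the $0$-statement, your dismissal of these tuples is wrong there, and the upper bound you write for $\E{X}$ is in fact an under-estimate of the true expectation. The repair (which is precisely the content of the paper's \cref{key_lemma} and \cref{key_corollary}) is to decompose by the run structure and bring in $\rho$; the correct bound is still $o(1)$, but your argument as written does not deliver it.

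\textbf{The second moment needs a sharp constant, not $\Theta$.} For vertex-disjoint copies $H_1,H_2$ your time-tuple technique gives only $\pr{H_i\subseteq\Gamma_t\mid G}=\Theta\bigl((t/(n^{2}p))^{\ell}\bigr)$ and $\pr{H_1\cup H_2\subseteq\Gamma_t\mid G}=\Theta\bigl((t/(n^{2}p))^{2\ell}\bigr)$, hence $\cov(Z_1,Z_2)=O\bigl((t/(n^{2}p))^{2\ell}\bigr)$, the same order as $\pr{A_1}\pr{A_2}$. Summed over the $\Theta(n^{2k}p^{2\ell})$ disjoint pairs this yields $\var{Y}=O\bigl(\E{Y}^{2}\bigr)$, which is useless for Chebyshev. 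The paper closes this by proving the \emph{sharp} asymptotic $\pr{H_0\subseteq\Gamma_t\mid G}=(2t/(n^{2}p))^{\ell}(1+o(1))$ for $t\gg n$ (the ``moreover'' clause of \cref{key_lemma}), so that the leading terms cancel and the covariance is genuinely $o\bigl((t/(n^{2}p))^{2\ell}\bigr)$; see \cref{gnp:nonintersecting}. Your sprinkling alternative does not sidestep this: the edge set laid down by a walk segment is far from a uniform subset of $E(G)$ (consecutive edges share an endpoint), and there is no stochastic domination by $G(n,m')$ to invoke.
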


\begin{remark}\label{pseudorandom}
When proving the above theorem, we do not really require that $G$ is random, but
rather that it possesses some pseudo-random properties, which occur with high
probability in $\gnp$.
\end{remark}

The complementary case $\md(H)<1$ is in fact quite different, and we were able
to find the threshold in that case for random walks on the complete graph $K_n$ 
only.  We will discuss potential difficulties in this aspect in 
\cref{section:concluding}.  Denote by $\odd(G)$ the number of odd degree 
vertices in $G$.

\begin{theorem}\label{trace:subtrees}
  Let $T$ be a fixed tree on at least $2$ vertices with $\odd(T)=\theta$.
  Let $\Gamma_t$ be the trace of a random walk of length $t$ on $K_n$.
  Then,
  \begin{equation*}
    \lim_{n\to\infty} \pr{T\subseteq \Gamma_t}
    =\begin{cases}
      0 & t \ll n^{1-2/\theta}\\
      1 & t \gg n^{1-2/\theta}.
    \end{cases}
  \end{equation*}
\end{theorem}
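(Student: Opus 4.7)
The setup simplifies dramatically because a lazy random walk on $K_n$ with holding probability $1/n$ produces an i.i.d.\ uniform sequence $X_0,\ldots,X_t$ in $[n]$; hence $\Gamma_t$ is the graph on $[n]$ whose edges come from pairs of consecutive samples. I plan to prove both directions of the theorem by moment methods on the number of labelled copies of $T$ in $\Gamma_t$, using the following combinatorial observation as the main bookkeeping device. If $\phi\colon V(T)\hookrightarrow[n]$ is an embedding with $\phi(T)\subseteq\Gamma_t$, then to each $e\in E(T)$ one can assign a step $\tau_e\in[t]$ such that $\{X_{\tau_e-1},X_{\tau_e}\}=\phi(e)$. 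Group the edges of $T$ into maximal ``time-blocks'' (two edges lie in the same block when their assigned steps are consecutive integers). Each block corresponds to a trail in $T$ traced by a contiguous subwalk of the random walk, so the blocks form an edge-disjoint trail decomposition of $T$. Because a tree with $\theta$ odd-degree vertices cannot be decomposed into fewer than $\theta/2$ trails, the number of blocks always satisfies $b\ge\theta/2$.

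For the $0$-statement I estimate the expected number of pairs $(\phi,\tau)$ as above, classified by the induced block count $b$. A configuration with $b$ blocks imposes $|E(T)|+b$ value-constraints on the i.i.d.\ sequence, so the probability of a specific $(\phi,\tau)$ being realized is $n^{-|E(T)|-b}$; the number of eligible pairs with $b$ blocks is at most $C_T(b)\cdot n^{v(T)}\cdot t^b$, where $C_T(b)$ absorbs the (finite) tree-dependent choice of trail decomposition, ordering of trails in time, and orientations. Using $|E(T)|=v(T)-1$ and summing,
\[
  \E{\text{\# copies of }T\text{ in }\Gamma_t}\;\le\;\sum_{b\ge\theta/2}C_T(b)\,n\,(t/n)^b\;=\;O\left(\left(t\cdot n^{2/\theta-1}\right)^{\theta/2}\right),
\]
which is $o(1)$ whenever $t\ll n^{1-2/\theta}$. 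Markov's inequality then completes this direction.

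For the $1$-statement I fix an arbitrary decomposition of $T$ into exactly $\theta/2$ trails and define a random variable $Y$ counting pairs $(\phi,\bar s)$, where $\phi$ is an embedding and $\bar s=(s_1,\ldots,s_{\theta/2})$ marks the starts of mutually time-disjoint contiguous subwalks, the $i$-th subwalk realizing the $i$-th prescribed trail under $\phi$. The same counting gives $\E{Y}=\Theta(t^{\theta/2}/n^{\theta/2-1})$, which tends to infinity exactly when $t\gg n^{1-2/\theta}$. I would then apply Chebyshev's inequality: expand $\E{Y^2}$ and organize the sum by two pieces of overlap data between pairs $(\phi,\bar s),(\phi',\bar s')$ --- the vertex intersection $\phi(V(T))\cap\phi'(V(T))$ and the set of coincidences among the $\theta$ time intervals.

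The main obstacle is bounding this variance. Each overlap pattern simultaneously reduces the available degrees of freedom (fewer free vertex labels and time starts) and the number of independent walk constraints; one must check that the savings in constraints are always dominated by the loss of free parameters, so that pairs with any nontrivial overlap contribute only $o(\E{Y}^2)$. I expect the decisive cases to be pairs sharing a single vertex of $V(T)$ and pairs sharing one full trail instance; once these are bounded by $O(\E{Y}^2/n)$, we obtain $\var{Y}=o(\E{Y}^2)$, and Chebyshev gives $Y\ge 1$ \whp{}, hence $T\subseteq\Gamma_t$ \whp{}.
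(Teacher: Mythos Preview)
Your $0$-statement is correct and coincides with the paper's: both reduce to the observation that any copy of $T$ in $\Gamma_t$ induces a trail decomposition into at least $\td(T)=\theta/2$ parts, whence the first-moment bound $\E{Z}=O\!\left(n(t/n)^{\theta/2}\right)=o(1)$ for $t\ll n^{1-2/\theta}$.

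For the $1$-statement your outline and the paper's diverge. The paper works with $Z$ (the number of copies), not with your configuration count $Y$, and splits the variance into vertex-disjoint and vertex-intersecting pairs. For vertex-disjoint pairs it does \emph{not} compute anything --- it invokes a correlation inequality of McDiarmid to show the two indicators are never positively correlated (this is where the i.i.d.\ structure of the walk on $K_n$ is exploited directly). For intersecting pairs $T_1,T_2$ it applies the key lemma to $T_1\cup T_2$ and then feeds in a structural fact: if $T_1\cap T_2$ is a tree (so $k'-\ell'=1$), then $T_1\cup T_2$ is again a tree containing a copy of $T$, hence $\td(T_1\cup T_2)\ge\td(T)$. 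This monotonicity of $\td$ under taking subtrees (equivalently, of $\odd$) is exactly what makes the tight case close, and it is the combinatorial heart of the paper's $1$-statement.

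Your route via $Y$ has one genuine advantage: for vertex-disjoint $\phi,\phi'$ any time-overlap forces incompatible walk constraints, so those pairs contribute zero and you get non-positive covariance without McDiarmid. But your variance sketch has a real gap. The assertion that the ``decisive cases'' are a single shared vertex or a single shared trail instance is not right. The dangerous configurations are pairs whose time intervals overlap at several points that force \emph{repeated} coincidences: a vertex of degree $\ge 3$ in $T$ lies in two trails of your fixed decomposition, so its image occurs at two distinct time points of the same copy, and these can line up with two time points of the other copy while pinning only one vertex of $\phi'$. In such cases the saving in walk constraints exceeds the loss in free vertex labels, and a naive ``each shared time point costs one vertex label'' bookkeeping fails. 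To control these cases you ultimately need the tree structure --- something equivalent to the subtree-monotonicity $\td(T')\le\td(T)$ that the paper isolates --- and without that ingredient the second-moment argument, as written, is incomplete.
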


In particular, the theorem implies that the probability that the trace contains 
a fixed path (the case $\theta=2$) as a subgraph is $1-o(1)$ if $t\gg 1$.
The corollary below follows easily from \cref{trace:subtrees}.

\begin{corollary}\label{trace:subforests}
  Let $F$ be a non-empty fixed forest, and let $T_1,\ldots,T_z$ be its
  connected components.  Let $\theta=\max_{i\in[z]}\left\{\odd(T_i)\right\}$.
  Let $\Gamma_t$ be the trace of a random walk of length $t$ on $K_n$.  Then,
  \begin{equation*}
    \lim_{n\to\infty} \pr{F\subseteq \Gamma_t}
    =\begin{cases}
      0 & t \ll n^{1-2/\theta}\\
      1 & t \gg n^{1-2/\theta}.
    \end{cases}
  \end{equation*}
\end{corollary}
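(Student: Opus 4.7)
The plan is to reduce both implications to \cref{trace:subtrees}. For the $0$-statement, pick a component $T_{j^*}$ of $F$ achieving $\odd(T_{j^*}) = \theta$; since $T_{j^*} \subseteq F$, any appearance of $F$ in $\Gamma_t$ entails an appearance of $T_{j^*}$, so $\pr{F \subseteq \Gamma_t} \le \pr{T_{j^*} \subseteq \Gamma_t} = o(1)$ whenever $t \ll n^{1-2/\theta}$, by \cref{trace:subtrees}.

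For the $1$-statement, the key structural observation is that under the paper's laziness convention a random walk on $K_n$ is exactly an i.i.d.\ uniform sequence $X_0, X_1, \ldots$ in $[n]$ (since $N^+(v) = V(K_n)$ for every $v$), so its restrictions to disjoint time intervals are mutually independent. By monotonicity of $\{F \subseteq \Gamma_t\}$ in $t$, we may assume $t \le n^{1-2/\theta} \log n$, hence $t = o(n)$. Partition $\{1, \ldots, t\}$ into $z$ consecutive intervals $I_1, \ldots, I_z$ of length $t' := \lfloor t/z \rfloor \gg n^{1-2/\theta}$, and let $\Gamma^{(i)}$ denote the edges traversed during $I_i$ --- the trace of a fresh random walk on $K_n$ of length ${\sim}\,t'$, independent of the other intervals.

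We then build vertex-disjoint copies of $T_1, \ldots, T_z$ iteratively. Suppose disjoint copies of $T_1, \ldots, T_{i-1}$ have been placed inside a vertex set $U$ of size at most $(i-1) \cdot \max_j |V(T_j)| = O(1)$. A union bound yields $\pr{X_s \in U \text{ for some } s \in I_i} \le |I_i| \cdot |U|/n = O(t'/n) = o(1)$; conditional on avoidance, the walk during $I_i$ is i.i.d.\ uniform on $[n] \setminus U$, which is precisely a random walk on $K_{n - |U|}$. Since $\odd(T_i) \le \theta$ and $|U| = O(1)$, we have $t' \gg n^{1-2/\theta} \ge (1+o(1))(n - |U|)^{1-2/\odd(T_i)}$, so \cref{trace:subtrees} applied on $K_{n - |U|}$ produces a copy of $T_i$ inside $\Gamma^{(i)}$ that avoids $U$, \whp{}. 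A union bound over $i \in [z]$ then delivers vertex-disjoint copies of $T_1, \ldots, T_z$, hence of $F$, inside $\Gamma_t$. (Isolated-vertex components of $F$ are handled trivially, since $\Gamma_t$ visits $\omega(1)$ distinct vertices \whp{}.)

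The only genuine obstacle is securing vertex-disjointness across the iterations; this dissolves once we exploit the i.i.d.\ nature of the walk on $K_n$, because conditioning on an $O(1)$-sized set of forbidden vertices costs only $o(1)$ in probability and reduces each step to an application of \cref{trace:subtrees} on a slightly smaller complete graph, making the induction essentially automatic.
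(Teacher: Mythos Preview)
Your proof is correct and follows essentially the same approach as the paper: both split the walk into $z$ independent segments (using that on $K_n$ the steps are i.i.d.\ uniform), apply \cref{trace:subtrees} to segment $i$ to locate a copy of $T_i$, and obtain vertex-disjointness via a union bound showing that each segment avoids the $O(1)$ previously used vertices with probability $1-O(t/n)=1-o(1)$. The only cosmetic difference is the order of operations---you condition on avoidance first and then invoke \cref{trace:subtrees} on $K_{n-|U|}$, whereas the paper first extracts the copy and then intersects with the avoidance event---but these are equivalent.
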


The overall proof strategy of \cref{gnp:trace:subgraphs,trace:subtrees} is to 
apply the first and the second moment methods.  Our key lemma 
(\cref{key_lemma}) estimates the probability that the random walk on a random 
graph will traverse the edges of a \emph{fixed} copy of a constant-sized graph 
$H$.  We find that if $t\gg n$, the probability for the appearance of a copy in 
the trace is asymptotically equivalent to the probability of its 
appearance in a uniform random choice of a subgraph of $\gnp$ with $t$ edges, 
and if $t\ll n$, it is determined by a structural property of $H$, namely, by 
the smallest number $\td$ for which $H$ admits a trail decomposition with $\td$ 
parts.  For the proof of the key lemma we use standard tools from Markov chain 
theory, and, in particular, a result about the mixing time of random graphs.

The rest of the paper is organized as follows.  In \cref{section:gnp} we state 
the key lemma and present some preliminary results to be used in its proof.  
The lemma itself is proved in \cref{section:key_lemma:proof}, and in 
\cref{proof:gnp:trace:subgraphs} we use it to prove 
\cref{gnp:trace:subgraphs}.  \Cref{section:kn} contains the proofs of 
\cref{trace:subtrees,trace:subforests}.  Finally, in \cref{section:concluding}, 
we conclude with some remarks and open problems.

\section{Walking on \texorpdfstring{$\gnp$}{G(n,p)}}\label{section:gnp}
Recall that a \emph{walk} on $G$ is a sequence of vertices $v_1,\ldots,v_t$
such that for $1\le i<t$, $\left\{v_i,v_{i+1}\right\}$ is an edge of $G$, and
that a \emph{trail} on $G$ is a walk in which all of these edges are
distinct.  Denote by $\td(G)$ the \emph{trail decomposition number} of $G$,
that is, the minimum number of edge-disjoint trails in $G$ whose union is the 
edge set of $G$.

We begin with a key lemma.
In what follows, we use $\mathbb{P}$ to denote the probability given that the
initial distribution of the walk is uniform, and $\mathbb{P}_\mu$ to denote the
probability given that the initial distribution is $\mu$.

\begin{lemma}\label{key_lemma}
  Let $\varepsilon,\gamma>0$, $p\ge n^{-1+\varepsilon}$, $G\sim\gnp$ and
  $p^{-1} \ll t = O\left(n^{2-\gamma}p\right)$.
  Let $H$ be a fixed
  graph with $\ell\ge 1$ edges and $\td\left(H\right)=\td$.  Then,
  \whp{} (over the distribution of $G$), for a fixed copy $H_0$ of $H$ in $G$,
  \begin{equation*}
    \pr{H_0\subseteq \Gamma_t\mid G} =
    \Theta\left(\left(np\right)^{-\ell}\sum_{r=\td}^\ell 
    \left(\frac{t}{n}\right)^r \right).
  \end{equation*}
  Moreover, if $t\gg n$, then
    \begin{equation*}
      \pr{H_0\subseteq \Gamma_t\mid G} =
      \left(\frac{2t}{n^2p}\right)^\ell (1+o(1)).
    \end{equation*}
\end{lemma}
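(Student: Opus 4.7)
The strategy is to combine first- and second-moment calculations with the rapid mixing of the lazy walk on $G\sim\gnp$, decoding walk traversals of $H_0$ through trail decompositions of $H$. Condition throughout on a good event for $G$ that holds \whp{}: all degrees satisfy $|N(v)| = (1+o(1))np$, so the stationary distribution of the lazy walk satisfies $\pi(v)=(1+o(1))/n$ uniformly in $v$; and the mixing time $\tmix$ (in total variation) is $O(\log n/\log(np)) = O(1/\varepsilon)$ by standard conductance bounds on $\gnp$ at density $p\ge n^{-1+\varepsilon}$, which is $o(t)$ under the hypothesis $p^{-1}\ll t$.

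The structural observation is the following. Whenever a walk of length $t$ traverses every edge of $H_0$, let $t_1<\dots<t_\ell$ be the times of the first traversal of each edge; these times partition into maximal runs of consecutive integers, and each run corresponds to a trail in $H_0$, yielding a trail decomposition of $H_0$ into $r$ parts with $\td\le r\le\ell$. Conversely, fix a trail decomposition $(T_1,\dots,T_r)$ of $H_0$ with $|T_j|=\ell_j$, a tuple of starting times $s_1<\dots<s_r$ with consecutive gaps exceeding $\tmix+\ell$, a matching of trails to starting times, and a direction for each trail. Using mixing to decouple the $r$ segments, the probability that for each $j$ the walk is at the prescribed start-vertex of $T_j$ at step $s_j$ and executes $T_j$ in the following $\ell_j$ steps equals $(1+o(1))\cdot n^{-r}(np)^{-\ell}$. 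Summing over the $\Theta(t^r/r!)$ valid tuples of starting times, the $r!\cdot 2^r$ matchings and direction choices, and the bounded number of trail decompositions into $r$ parts yields the first-moment upper bound
\begin{equation*}
  \pr{H_0\subseteq\Gamma_t\mid G}\le O\left((np)^{-\ell}\sum_{r=\td}^\ell(t/n)^r\right).
\end{equation*}

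For the matching $\Omega$-lower bound it suffices to realize the dominant term of the sum. When $t=O(n)$, fix a minimum trail decomposition of $H_0$ with $r=\td$ parts; when $t\gg n$, use the trivial decomposition with $r=\ell$ (each edge as its own trail). Partition $[t]$ into $r$ consecutive blocks of length $\lfloor t/r\rfloor\gg\tmix$, let $Z_j$ count the executions of the $j$-th trail that begin and end entirely within the $j$-th block, and compute $\E{\prod_j Z_j}$ and $\E{\prod_j Z_j^2}$ using mixing; the former is $\Theta((t/n)^r(np)^{-\ell})$ and the latter is $O$ of its square, so Paley--Zygmund gives $\pr{\prod_j Z_j\ge 1\mid G}=\Omega((t/n)^r(np)^{-\ell})$, matching the dominant term. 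For the ``moreover'' refinement under $t\gg n$, the $r=\ell$ analysis can be sharpened: for each edge $e=\{u,v\}\in E(H_0)$ the probability of traversal at any fixed step is $\pi(u)/|N^+(u)|+\pi(v)/|N^+(v)|=(1+o(1))\cdot 2/(n^2p)$, so its traversal count over $t$ steps is approximately Poisson with mean $\mu:=(1+o(1))\cdot 2t/(n^2p)=o(1)$ under $t=O(n^{2-\gamma}p)$, and a Chen--Stein coupling shows that the $\ell$ traversal counts are asymptotically independent Poissons, giving $\pr{H_0\subseteq\Gamma_t\mid G}=(1-e^{-\mu})^\ell(1+o(1))=(2t/(n^2p))^\ell(1+o(1))$. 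The main technical obstacle throughout is this asymptotic-independence step, which requires handling correlations between traversals of adjacent edges of $H_0$ at nearby times and showing that such contributions correspond to trail decompositions with strictly fewer parts, hence are subdominant under $t\gg n$.
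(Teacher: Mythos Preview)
Your upper bound has a real gap: you union-bound only over tuples of starting times with gaps exceeding $\tmix+\ell$, for which the per-configuration probability $(1+o(1))\,n^{-r}(np)^{-\ell}$ is indeed correct, but you never handle tuples with one or more short gaps. When two consecutive runs are separated by $1\le g\le\tmix$, the conditional probability of landing at the start vertex of the next trail is $O((np)^{-1})$ rather than $\sim n^{-1}$, which is \emph{larger} by a factor $1/p$; with $q$ short gaps the per-configuration probability becomes $O\bigl(n^{-(r-q)}(np)^{-(\ell+q)}\bigr)$ while there are only $O\bigl(t^{r-q}\tmix^{\,q}\bigr)$ such configurations, so their total contribution is $(t/n)^r(np)^{-\ell}\cdot O\bigl((\tmix/(tp))^q\bigr)$, and one must still argue that this is negligible. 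The paper treats this explicitly: it stratifies by a ``defective-run'' parameter $q(W)$ and uses \cref{scattered} together with \eqref{eq:lambda:defective} to show every $q\ge 1$ stratum is lower order. Your decision to record only first-traversal times does let you bypass the paper's edge-multiplicity bound (\cref{gnp:multiplicity}), which is a genuine simplification, but the defective-run bookkeeping is not optional.

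Your lower bound via Paley--Zygmund on $\prod_j Z_j$ is a legitimate alternative to the paper's route, which instead computes $\pr{A_W}$ directly for $W\in\mathcal{W}_{\ell,r,0}$ and must prove (\cref{notinw}) that, conditionally on traversing $H_0$ at the times in $W$, no further traversal occurs. However, your second-moment claim is misstated: since $\mu_j:=\E{Z_j}=\Theta\bigl((t/n)(np)^{-\ell_j}\bigr)=o(1)$ under $t=O(n^{2-\gamma}p)$, one has $\E{Z_j^2}\sim\mu_j$ and hence $\E{\prod_j Z_j^2}=\Theta\bigl(\prod_j\mu_j\bigr)=\Theta\bigl(\E{\prod_j Z_j}\bigr)$, not $O$ of its square. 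Paley--Zygmund then yields the desired $\Omega(\prod_j\mu_j)$; your stated bound would give $\Omega(1)$, contradicting the upper bound whenever the target is $o(1)$. For the ``moreover'' statement the paper simply isolates $\Lambda_{\ell,\ell,0}$ and computes the constant directly; your Chen--Stein route is plausible, but as you yourself note, the asymptotic-independence step is precisely the work and remains only sketched.
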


The assumption that $p^{-1}\ll t = O\left(n^{2-\gamma}p\right)$ in the statement
of the lemma is artificial.  The upper bound on $t$ is essential for proving 
(in \cref{gnp:multiplicity}) that the random walk traverses all edges at most a 
constant number of times with very high probability -- a fact which is clearly 
not true for every $t$.
The lower bound on $t$ is used to show that it is ``too expensive'' for the walk
to traverse an edge of $H_0$ more than once (see \eqref{eq:lambda2}).  As we 
will see later, these bounds on $t$ do not affect the proofs of our main 
theorems.

Before proving the lemma, we state a simple corollary.

\begin{corollary}\label{key_corollary}
  Let $H$ be a fixed graph with $k$ vertices, $\ell\ge 1$
  edges, $\md(H)=\md$ and $\td(H)=\td$.  Let $\varepsilon,\gamma>0$,
  $\nu=\max\{\md,1\}$, $p\ge n^{-1/\nu+\varepsilon}$, $G\sim\gnp$ and
  $p^{-1} \ll t = O\left(n^{2-\gamma}p\right)$.
  Finally, let $Z$ be a random variable counting the number of copies of
  $H$ in $\Gamma_t$ (where multiple edges are ignored).  Then, \whp{} (over the 
  distribution of $G$),
  \begin{equation*}
    \E{Z\mid G} = \Theta\left(
    n^{k-\ell}\sum_{r=\td}^{\ell}\left(\frac{t}{n}\right)^r \right).
  \end{equation*}
\end{corollary}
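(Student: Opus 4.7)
The plan is to derive the corollary as a short consequence of the key lemma combined with standard concentration of subgraph counts in $\gnp$. The starting point is linearity of expectation: if $N_H(G)$ denotes the number of (labelled) copies of $H$ contained in $G$, then
\begin{equation*}
\E{Z\mid G} = \sum_{H_0 \subseteq G,\ H_0 \cong H} \pr{H_0 \subseteq \Gamma_t \mid G},
\end{equation*}
where the sum runs over all copies $H_0$ of $H$ in $G$. The key lemma tells us that there is an event on $G$ of probability $1-o(1)$ on which every such summand equals $\Theta\bigl((np)^{-\ell}\sum_{r=\td}^{\ell}(t/n)^r\bigr)$, uniformly in the choice of $H_0$. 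Hence on this event,
\begin{equation*}
\E{Z\mid G} = N_H(G) \cdot \Theta\!\left((np)^{-\ell}\sum_{r=\td}^{\ell}\left(\frac{t}{n}\right)^r\right).
\end{equation*}

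The next step is to show that $N_H(G) = \Theta(n^k p^\ell)$ whp. This is where the hypothesis $p \ge n^{-1/\nu+\varepsilon}$ enters: since $\nu \ge \md$, we have $p \ge n^{-1/\md + \varepsilon}$, so the expected number of copies of every non-empty subgraph $H'\subseteq H$ in $\gnp$ tends to infinity polynomially in $n$. This is precisely the regime in which standard second-moment arguments (see, e.g., \cite{JLR}*{Chapter~3}) yield $N_H(G) = (1+o(1))\,\E{N_H(G)} = \Theta(n^k p^\ell)$ whp. When $\md < 1$, i.e.\ when $H$ is a forest, the condition $\nu = 1$ ensures $p \ge n^{-1+\varepsilon}$, which is still more than enough for this concentration.

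Intersecting the two whp events (still an event of probability $1-o(1)$) and multiplying the two asymptotics gives
\begin{equation*}
\E{Z\mid G} = \Theta(n^k p^\ell)\cdot\Theta\!\left((np)^{-\ell}\sum_{r=\td}^{\ell}\left(\frac{t}{n}\right)^r\right) = \Theta\!\left(n^{k-\ell}\sum_{r=\td}^{\ell}\left(\frac{t}{n}\right)^r\right),
\end{equation*}
as required. There is no real obstacle here beyond verifying that the uniformity in the key lemma is enough to justify the termwise replacement and that the concentration of $N_H(G)$ is available in the stated range of $p$; the heavy lifting has already been done in the key lemma and in classical subgraph-count estimates. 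The only mild subtlety worth flagging is that one should interpret (and, in its proof, establish) the key lemma as providing a single good event on $G$ on which the estimate holds simultaneously for every copy $H_0$, rather than a separate good event per copy; this is what makes the summation step legitimate.
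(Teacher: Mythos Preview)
Your proof is correct and follows essentially the same approach as the paper: use $p\ge n^{-1/\nu+\varepsilon}\gg n^{-1/\md}$ to conclude that the number of copies of $H$ in $G$ is \whp{} asymptotically $\Theta(n^k p^\ell)$, then combine the key lemma with linearity of expectation. Your remark about needing a single good event on $G$ (rather than one per copy) is a worthwhile clarification that the paper leaves implicit.
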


\begin{proof}[Proof (of the corollary)]
  Since $p\ge n^{-1/\nu+\varepsilon}\gg n^{-1/\md}$, the number of copies of
  $H$ in $G$ is \whp{} asymptotically equal to its expectation
  (see for example~\cite{JLR}*{Remark 3.7})
  which is $\Theta\left(n^kp^\ell\right)$.
  The result then follows from \cref{key_lemma} and the linearity of
  expectation.
\end{proof}

Our goal now is to prove \cref{key_lemma}.  In what follows,
$\varepsilon,\gamma>0$ are fixed constants, $p\ge n^{-1+\varepsilon}$,
$G\sim\gnp$, $X_0,X_1,\ldots,X_t$ is a (lazy, simple) random walk on $G$
starting at a uniformly chosen vertex, $\Gamma_t$ is its trace and
$p^{-1}\ll t=O\left(n^{2-\gamma}p\right)$.
The \emph{transition rate} of $X$ from $u$ to $v$ is the probability
\begin{equation*}
  p_{uv} = \pr{X_{t+1}=v\mid X_t=u} =
  \pr{X_1=v\mid X_0=u},
\end{equation*}
and for an integer $s\ge 0$ we denote
\begin{equation*}
  p_{uv}^s = \pr{X_{t+s}=v\mid X_t=u} =
  \pr{X_s = v\mid X_0 = u}.
\end{equation*}
Since, as is well known, $G$ is \whp{} connected,
the sequence $X$ forms an irreducible Markov chain, hence it has a unique
stationary distribution given by (see, e.g., \cite{LPW}*{Section 1.5})
\begin{equation*}
  \pi_v = \frac{d(v)}{\sum_{u\in [n]}d(u)} = \frac{d(v)}{2\left|E\right|}.
\end{equation*}

The following lemma about the degree distribution in $\gnp$ can easily be proved
using standard estimates for the tail of the binomial distribution.

\begin{lemma}\label{gnp:regular}
  With high probability, $d(v)\sim np$, and thus $\pi_v\sim n^{-1}$, for every
  $v\in[n]$.
\end{lemma}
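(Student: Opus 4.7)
The plan is to invoke a multiplicative Chernoff bound on each individual binomial degree, then dispatch the universality over vertices via a union bound that is comfortably absorbed by the assumption $p \ge n^{-1+\varepsilon}$.

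First, I would observe that for any fixed $v \in [n]$, the degree $d(v)$ in $G\sim\gnp$ is distributed as $\bin{n-1}{p}$, with mean $\mu = (n-1)p = (1-o(1))np$. Since $p \ge n^{-1+\varepsilon}$, we have $\mu \ge n^{\varepsilon}/2$ for large $n$. I would then fix a sequence $\delta = \delta(n) \downarrow 0$ slowly enough that $\delta^2 \mu \gg \ln n$; for concreteness $\delta = n^{-\varepsilon/3}$ works, since then $\delta^2 \mu \ge n^{\varepsilon/3}/2$.

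The standard multiplicative Chernoff bound gives
\begin{equation*}
  \pr{\left| d(v) - \mu \right| > \delta \mu}
  \le 2\exp{-\delta^2 \mu / 3} = o(1/n).
\end{equation*}
A union bound over the $n$ vertices then yields that, whp, $d(v) = (1\pm\delta)\mu \sim np$ uniformly for every $v \in [n]$, proving the first assertion.

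For the statement on the stationary distribution, I would simply sum the degree estimates: on the same high-probability event, $2|E| = \sum_{u\in[n]} d(u) = (1+o(1))\,n \cdot np = (1+o(1))\,n^2 p$. Plugging into $\pi_v = d(v)/(2|E|)$ then gives $\pi_v = (1+o(1))\, np/(n^2 p) = (1+o(1))/n$ uniformly in $v$, as required. There is no real obstacle here — the whole argument is a one-line Chernoff-plus-union-bound computation, and the choice of $\varepsilon > 0$ in the hypothesis $p \ge n^{-1+\varepsilon}$ is precisely what makes the per-vertex concentration strong enough to survive the loss of a factor $n$ in the union bound.
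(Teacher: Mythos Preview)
Your argument is correct and is exactly the ``standard estimates for the tail of the binomial distribution'' that the paper invokes without spelling out; the paper gives no detailed proof of this lemma beyond that one-line remark. The Chernoff-plus-union-bound computation you wrote is precisely what is meant, so there is nothing to add.
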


We will use the fact that the random walk on $\gnp$ ``mixes well''.
Roughly speaking, this means that the walk quickly forgets its starting point,
and the distribution of its location quickly approaches stationarity.
Recall that the \emph{total variation distance} between the distribution of 
$X_t$ and the stationary distribution is
\begin{equation*}
  \dtv{X_t}{\pi} = \frac{1}{2}\sum_{v\in[n]}\left|\pr{X_t=v}-\pi_v\right|.
\end{equation*}
In \cite{Hil96}, Hildebrand showed\footnote{Hildebrand shows this for a 
non-lazy random walk.  However, as the probability that the lazy walk stays
put at least once in a walk of fixed length is $o(1)$, we may ignore this 
difference here.} that there exists a constant $s=s(\varepsilon)$ for which, 
\whp{} (and regardless of the starting distribution),
\begin{equation*}
  \dtv{X_s}{\pi} < 1/e.
\end{equation*}
It follows (see, i.e., \cite{LPW}*{Section 4.5}) that for an integer $\ell>0$,
\begin{equation*}
  \dtv{X_{\ell s}}{\pi} < (2/e)^\ell.
\end{equation*}
We therefore obtain the following.

\begin{claim}\label{gnp:mixing}
  For every $x>0$ there exists $B=B(\varepsilon,x)=O(\ln{n})$ such that \whp{}
  \begin{equation*}
    \dtv{X_B}{\pi} = o(n^{-x}).
  \end{equation*}
\end{claim}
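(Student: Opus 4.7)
The plan is to apply the two facts from Hildebrand that were quoted immediately above the claim. By Hildebrand's result, there exists a constant $s=s(\varepsilon)$ such that, whp over the choice of $G\sim\gnp$, the walk satisfies $\dtv{X_s}{\pi}<1/e$ regardless of the starting distribution. The standard submultiplicativity bound for total variation distance from stationarity then gives, on the same high-probability event,
\begin{equation*}
  \dtv{X_{\ell s}}{\pi} < (2/e)^\ell
\end{equation*}
for every positive integer $\ell$.

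It therefore suffices to choose $\ell=\ell(n,x)$ large enough that $(2/e)^\ell = o\left(n^{-x}\right)$ and set $B:=\ell s$. Taking, for instance,
\begin{equation*}
  \ell = \ceil{\frac{(x+1)\ln n}{\ln(e/2)}},
\end{equation*}
one gets $(2/e)^\ell \le n^{-(x+1)} = o\left(n^{-x}\right)$, while $B=\ell s = O(\ln n)$ since $s$ depends only on $\varepsilon$. Setting $B(\varepsilon,x):=\ell s$ then yields the claim.

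There is essentially no obstacle here: the claim is a direct quantitative strengthening of Hildebrand's theorem obtained by iterating the mixing contraction a logarithmic number of times. The only point worth recording is that the high-probability event is the same event on which Hildebrand's bound holds, so no additional union bound or coupling argument is needed; in particular, the implied constant in $B=O(\ln n)$ can be taken uniform in the (arbitrary) starting distribution of the walk, which is what will be used in later applications of the claim.
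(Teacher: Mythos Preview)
Your proof is correct and matches the paper's approach exactly: the paper simply says ``We therefore obtain the following'' after stating Hildebrand's bound and the submultiplicativity inequality, leaving the choice of $\ell$ implicit, and your argument just makes that choice explicit.
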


Let $x$ be a large positive constant to be determined later.
Say that a vertex distribution $\pi'$ is \emph{almost stationary} if
$\dtv{\pi'}{\pi} = o\left(n^{-x}\right)$.
The last corollary practically means that regardless of the starting 
distribution, after $B$ steps, say, the distribution of the walk is almost 
stationary.

For a vertex $v$, let
$\mathbf{n}_v$ be the uniform distribution over $N(v)$, and for $s>0$ denote
by $\eta(v,s)$ the number of exits the walk has made from vertex $v$ by time
$s$.  Formally,
\begin{equation*}
  \eta(v,s) = \left|\left\{i\in[s]\mid X_{i-1}=v,\ X_i\ne v\right\}\right|.
\end{equation*}
A key observation is that typically no vertex is visited too many times, hence
no edge is traversed too many times.  This is stated in the following two
lemmas.

\begin{lemma}\label{gnp:exits}
  For every $\alpha>0$ there exists $\gamma'>0$ such that \whp{} (over the 
  distribution of $G$), the probability that the random walk (of length $t$) 
  visits at least one of the vertices more than $n^{1-\gamma'}p$ times is 
  $o\left(n^{-\alpha}\right)$.
\end{lemma}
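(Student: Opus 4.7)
The plan is to exploit the mixing bound in \cref{gnp:mixing} to chop the walk into (almost) stationary blocks of length $B = O(\ln n)$, bound the number of such blocks that hit any given vertex by a Chernoff argument, and then take a union bound over the $n$ vertices.  Throughout we condition on the high-probability events of \cref{gnp:regular,gnp:mixing}, so that $\pi_v \sim 1/n$ for every $v$ and the mixing estimate holds with some parameter $x$ to be chosen later.

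Concretely, fix a large constant $x = x(\alpha) > \alpha + 2$ and let $B = B(\varepsilon, x) = O(\ln n)$ be the mixing window supplied by \cref{gnp:mixing}.  Partition the walk into $k = \lfloor t/B \rfloor$ consecutive blocks of length $B$; the leftover tail contributes only $O(\ln n)$ extra visits and is negligible.  For each vertex $v$, let $Z_i^{(v)} \in \{0,1\}$ indicate that the $i$-th block contains at least one visit to $v$.  Since each block has length $B$, one has $\eta(v,t) \le B \sum_i Z_i^{(v)} + O(\ln n)$.

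Next we decouple the blocks.  Using the TV bound at each boundary, couple the walk with an auxiliary process $\tilde X$ whose block starting points $\tilde X_{iB}$ are drawn i.i.d.\ from $\pi$.  At each boundary the coupling succeeds with probability $1 - o(n^{-x})$, so by a union bound it succeeds at all $k \le n^2$ boundaries simultaneously except with probability $o(n^{-x+2}) = o(n^{-\alpha})$.  Under the coupling the $\tilde Z_i^{(v)}$ are i.i.d.\ Bernoullis with mean at most $B \pi_v (1+o(1)) \le 2B/n$, whence $\E{\sum_i \tilde Z_i^{(v)}} \le 2t/n = O(n^{1-\gamma}p)$.  For any $\gamma' < \min(\gamma, \varepsilon)$ the threshold $n^{1-\gamma'}p/B$ exceeds this mean by a factor $n^{\Omega(1)}$, and the standard Chernoff bound gives
\begin{equation*}
  \pr{\sum_i \tilde Z_i^{(v)} > n^{1-\gamma'}p/B}
  \le \exp{-\Omega\left(n^{1-\gamma'}p/B\right)}
  = \exp{-n^{\Omega(1)}},
\end{equation*}
using $np \ge n^\varepsilon$.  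A final union bound over the $n$ choices of $v$ yields the required $o(n^{-\alpha})$ estimate.

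The main technical step is executing the block decoupling rigorously: the blocks of the original walk inherently share endpoints, so we must iterate the TV bound of \cref{gnp:mixing} along the sequence of boundaries instead of using genuine independence.  The per-boundary cost is super-polynomially small in $n$ (by our choice of $x$), and its linear accumulation over $k$ stays well below $n^{-\alpha}$.  Once the coupling is set up, the remaining Chernoff estimate and the union bound are routine.
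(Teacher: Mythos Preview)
Your argument is correct and leads to the same conclusion, but it proceeds quite differently from the paper's proof.

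The paper chops the walk into a small number $L=O(n^{1-\gamma/2}p)$ of \emph{long} segments of length $s=n^{1-\gamma/2}$, separated by buffers of length $B$.  Within a single segment it bounds the probability of making $a$ visits to $v$ by an elementary recursion: each additional return costs a factor $O(n^{-\gamma/2})$ (either the walk returns within the next $B$ steps, at cost $O(B/(np))$, or it first remixes and then hits $v$ from near-stationarity, at cost $O(s/n)$).  Choosing $a$ a large constant makes the per-segment failure probability $o(n^{-\alpha-2})$, and a union bound over the $L=o(n)$ segments and $n$ vertices finishes.  No Chernoff bound and no explicit coupling are used.

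You instead use \emph{short} blocks of length $B$ and replace the recursion by a coupling to a process with i.i.d.\ blocks started from $\pi$, then invoke Chernoff on the number of blocks that hit $v$.  This is the standard ``regenerative'' or hybrid-coupling route for concentration of additive functionals of mixing chains, and it works here because the per-boundary TV error $o(n^{-x})$ accumulates only linearly over $k\le n^2$ boundaries.  One small point worth making explicit when you write it out: the statement ``block starting points i.i.d.\ $\pi$ $\Rightarrow$ blocks i.i.d.'' is true for the \emph{auxiliary} process (independent length-$B$ walks from fresh $\pi$ samples), but not for the real walk; your last paragraph already flags this, and the iterated-TV/hybrid argument you allude to is exactly what bridges the two.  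Also, the coupling at the very first boundary (time $0$) has cost $d_{TV}(\text{uniform},\pi)$, which is not $o(n^{-x})$; but the first block contributes only $O(B)$ visits and can simply be absorbed into the $O(\ln n)$ slack.

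In short: the paper's argument is more elementary (pure union bounds and a two-line recursion), while yours is the cleaner ``mixing $\Rightarrow$ concentration'' template and would transfer more readily to other base graphs with a comparable mixing bound.
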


\begin{proof}
  First note that we may assume that $\gamma\le\varepsilon$; otherwise, let
  $t_\varepsilon=n^{2-\varepsilon}p\gg n^{2-\gamma}p = \Omega(t)$.
  We can now prove the lemma for a walk of length $t_\varepsilon$, and conclude
  that the result holds for the walk of length $t$.

  Fix $v\in[n]$ and let $s=n^{1-\gamma/2}$.  Observe that in order to exit
  $v$, starting at a vertex which is not $v$, the walk must first enter it,
  and in view of \cref{gnp:regular} the probability for that to happen
  at any given step is $O\left(1/\left(np\right)\right)$.
  It follows that \whp{} (over the distribution of $G$),
  \begin{equation*}
    Q:=\pr_{\mathbf{n}_v}{\eta(v,B)\ge 1\mid G} = O\left(\frac{B}{np}\right)
    = O\left(\frac{\ln{n}}{n^\varepsilon}\right)
    = o\left(n^{-\gamma/2}\right).
  \end{equation*}
  For an integer $a>0$, let
  \begin{equation*}
    P_\mu(a) := \pr_{\mu}{\eta(v,s)\ge a\mid G}.
  \end{equation*}
  Note that for an almost stationary distribution $\pi'$, and for large enough 
  $x$, by the union bound we have that \whp{}
  \begin{equation*}
    P_{\pi'}(1) \le P_\pi(1) + o\left(n^{-x}\right) = O(s/n)
    = O\left(n^{-\gamma/2}\right),
  \end{equation*}
  and for $a>1$, there exists an almost stationary distribution $\pi''$ for 
  which
  \begin{align*}
    P_{\pi'}(a) \le P_\pi(a) + o\left(n^{-x}\right)
    &\le P_\pi(a-1)\left(Q+P_{\pi''}(1)\right) + o\left(n^{-x}\right)\\
    &= P_\pi(a-1)\cdot O\left(n^{-\gamma/2}\right) + o\left(n^{-x}\right),
  \end{align*}
  as the probability of visiting $v$ at least $a$ times is at most the 
  probability of visiting it $a-1$ times, and conditioning on that, the
  probability of visiting it once more, which is at most the probability of
  visiting it during the first $B$ steps after exiting from it, plus the 
  probability of visiting it at least once during $s$ steps, starting from
  (another) almost stationary distribution $\pi''$.  By induction,
  for $a>2(\alpha+2)/\gamma$ and $x>a\gamma/2$,
  \begin{equation}
    P_{\pi'}(a) \le P_\pi(1)\cdot O\left(n^{-(a-1)\gamma/2}\right)
    + o\left(n^{-x}\right)
    = O\left(n^{-a\gamma/2}\right)
    = o\left(n^{-\alpha-2}\right).\label{eq:avisits}
  \end{equation}
  Now, let
  \begin{equation*}
    L=\ceil{t/(s+B)} = O\left(n^{1-\gamma/2}p\right) = o(n).
  \end{equation*}
  Consider
  dividing $[t]$ into $L$ segments of length at most $s$, with ``buffers'' of
  length $B$ between them. Noting that the distribution of the first vertex is
  uniform (hence almost stationary),
  it follows from~\eqref{eq:avisits} and the union 
  bound that (\whp{} over the distribution of $G$) with probability
  $o\left(n^{-\alpha-1}\right)$ there exists a segment in which the walk exits
  $v$ at least $a$ times.  Considering the possible visits in the buffers 
  between the segments as well (at most $BL$ such visits), we conclude that 
  with probability $o\left(n^{-\alpha-1}\right)$ the walk exits $v$ more than 
  $n^{1-\gamma'}p$ times by time $t$, for $\gamma'=\gamma/3$, say.
  The union bound over all vertices yields the desired result.
\end{proof}

\begin{lemma}\label{gnp:multiplicity}
  For every $\alpha>0$ there exists $M>0$ such that \whp{} (over the 
  distribution of $G$), the probability that the random walk (of length $t$) 
  traverses at least one of the edges more than $M$ times is 
  $o\left(n^{-\alpha}\right)$.
\end{lemma}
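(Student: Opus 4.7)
The plan is to bootstrap from the per-vertex exit estimate of \cref{gnp:exits} to the per-edge multiplicity estimate, exploiting the fact that each time the walk exits a vertex $u$ it picks a neighbour uniformly at random, independently of the past. First, I would apply \cref{gnp:exits} with parameter $\alpha+2$ to obtain a constant $\gamma'>0$ such that, \whp{} over $G$, setting $N:=n^{1-\gamma'}p$, the event $\{\eta(v,t)\le N\text{ for all }v\in[n]\}$ has probability $1-o(n^{-\alpha-2})$. I would further restrict to the (\whp{}) good event on $G$ that $d(v)\sim np$ for every $v$ (\cref{gnp:regular}) and $|E(G)|=O(n^2p)$.

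Next, fix an oriented edge $(u,v)$ with $v\in N(u)$, and let $T_{uv}$ denote the number of $u\to v$ transitions made by the walk in $t$ steps. By the strong Markov property applied at successive exit times from $u$, the sequence of neighbours chosen at these exits is i.i.d.\ uniform on $N(u)$ and independent of the exit times. Consequently, on the good event $\eta(u,t)\le N$, the variable $T_{uv}$ is stochastically dominated by $\mathrm{Bin}(N,1/d(u))$, where $1/d(u)=\Theta(1/(np))$, so that $N/d(u)=\Theta(n^{-\gamma'})$. The standard tail bound $\pr{\mathrm{Bin}(N,q)\ge k}\le(eNq/k)^k$ thus yields
\begin{equation*}
  \pr{T_{uv}\ge M/2\mid G} \le O\left(n^{-\gamma' M/2}\right) + o\left(n^{-\alpha-2}\right).
\end{equation*}

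The multiplicity of the unoriented edge $\{u,v\}$ equals $T_{uv}+T_{vu}$, so choosing $M$ so large that $\gamma'M/2>\alpha+3$ ensures that each of the two summands exceeds $M/2$ with probability $o(n^{-\alpha-2})$. A union bound over the $O(n^2p)$ edges of $G$ then gives total probability $O(n^2p\cdot n^{-\alpha-2})=o(n^{-\alpha})$, completing the proof. The only conceptual (but technically routine) point is the clean separation between the exit-choice sequence and the exit-time sequence which gives the binomial domination; once this is set up, the rest is a standard Chernoff-type tail bound followed by a union bound, and there are no further obstacles.
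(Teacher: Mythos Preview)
Your proposal is correct and follows essentially the same route as the paper: both arguments invoke the per-vertex exit bound of \cref{gnp:exits}, observe that the successive exit destinations from a vertex $u$ form an i.i.d.\ uniform sequence on $N(u)$ (the paper phrases this via an explicit ``list model'', you via the strong Markov property), deduce a binomial domination for the number of $u\to v$ transitions, apply a Chernoff-type tail bound, and finish with a union bound over edges. One minor imprecision: the exit \emph{times} are not independent of the exit \emph{choices} (the trajectory after each exit depends on where the walk went), but what you actually use---that on $\{\eta(u,t)\le N\}$ the count $T_{uv}$ is bounded by the number of $v$'s among the first $N$ entries of the pre-sampled i.i.d.\ list, which is genuinely $\mathrm{Bin}(N,1/d(u))$---is correct and is exactly the paper's argument.
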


\begin{proof}
  For a vertex $v$ and integer $i\ge 0$, let $x_v^i\sim\mathbf{n}_v$,
  independently of each other.  Think of the random walk $X_t$ as follows.
  $X_0$ is sampled uniformly at random from $V$, and at each time
  $t\ge 0$, $X_{t+1}$ is determined as follows: with probability
  $1/\left(d\left(X_t\right)+1\right)$ it equals $X_t$, and with the
  remaining probability it equals $x_{X_t}^{\eta\left(X_t,t\right)}$.
  We think of $x_v^i$ as being sampled before the walk is performed, and the 
  walk, when it exits $v$ for the $i$'th time, simply reveals 
  $x_v^i$\footnote{This is somewhat similar to the \emph{list model} described
  in \cite{BL13}.}.

  Let $(u,v)$ be a directed edge.  Let $x_{uv}^i$ be the indicator of the
  event $x_u^i=v$.  The number of traversals of $(u,v)$ during the first
  $\eta$ exits from $u$ is therefore (\whp{}) the sum of $\eta$ independent
  Bernoulli-distributed random variables with success probability (roughly)
  $1/(np)$.  Thus, the probability that $(u,v)$ was traversed at least
  $M$ times during the first $\eta$ exits from $u$ equals the probability
  that a binomial random variable with $\eta$ trials and success probability
  (roughly) $1/(np)$ is at least $M$.  The probability that $(u,v)$ was
  traversed at least $M$ times is at most the probability that it was
  traversed at least $M$ times during the first $\eta$ exits from $u$ in
  addition to the probability that the walk has exited $u$ more than $\eta$
  times.

  Thus, by the union bound, the probability that there exists $(u,v)$ which
  was traversed at least $M$ times by time $t$ is at most
  \begin{equation*}
    n^2\cdot\pr{\bin{\eta}{\frac{(1+o(1))}{np}}\ge M}
    + \pr{\exists u:\ \eta(u,t)>\eta}.
  \end{equation*}
  Choosing $\eta=2n^{1-\gamma'}p$, with the right $\gamma'$,
  \cref{gnp:exits} tells us that the second term is $o\left(n^{-\alpha}\right)$,
  and standard concentration results for the binomial distribution tell us that
  for large enough $M$
  the first term is $o\left(n^{-\alpha}\right)$, concluding the proof.
\end{proof}

For a set $W\subseteq[t]$ denote by $r(W)$ the minimum number of integer
intervals whose union is $W$.  In symbols,
\begin{equation*}
  r(W) = \left|\left\{1\le i\le t\mid i\in W \wedge i+1\notin W\right\}\right|.
\end{equation*}
For $W$ with $r(W)=r$ write
\begin{equation*}
  W = \left\{t_1,t_1+1,\ldots,t_1+a_1-1,
             t_2,t_2+1,\ldots,t_2+a_2-1,
             t_3\ldots,
             t_r,t_r+1,\ldots,t_r+a_r-1\right\},
\end{equation*}
where $t_i-1\notin W$ for $i\in[r]$ and $t_i+a_i<t_j$ for $1\le i<j\le r$.
If $t_{i+1}-(t_i+a_i)<3B$, we say that the $(i+1)$'th run is \emph{defective},
and we denote by $q(W)=\left|\left\{i\in[r-1]\mid
t_{i+1}-(t_i+a_i)<3B\right\}\right|$
the number of defective runs in $W$.  Let
\begin{equation*}
  \mathcal{W}_{w,r} =
  \left\{W\subseteq[t]\mid |W|=w,\ r(W)=r\right\},
\end{equation*}
and
\begin{equation*}
  \mathcal{W}_{w,r,q} =
  \left\{W\subseteq[t]\mid |W|=w,\ r(W)=r,\ q(W)=q\right\}.
\end{equation*}

\begin{claim}\label{ws:count}
  For every $1\le r\le w$,
  \begin{equation*}
    \left|\mathcal{W}_{w,r}\right| =
    \binom{w-1}{r-1}\binom{t-w+1}{r}.
  \end{equation*}
\end{claim}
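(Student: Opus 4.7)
The plan is a direct combinatorial double count. A set $W\in\mathcal{W}_{w,r}$ is determined by two independent choices: the composition of $w$ into the positive run-lengths $a_1,\ldots,a_r$, and the placement of these runs inside $[t]$ with strictly positive gaps between consecutive runs.

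First I would count the compositions. Writing $w=a_1+\cdots+a_r$ with each $a_i\ge 1$ corresponds, by the usual stars-and-bars argument (choose $r-1$ ``bars'' among $w-1$ gaps between stars), to $\binom{w-1}{r-1}$ choices.

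Next I would count the placements, given $a_1,\ldots,a_r$. The positions outside $W$ split into $r+1$ gaps $g_0,g_1,\ldots,g_r$ (before the first run, between consecutive runs, and after the last run), satisfying
\begin{equation*}
  g_0+g_1+\cdots+g_r = t-w,
\end{equation*}
with $g_0,g_r\ge 0$ and $g_1,\ldots,g_{r-1}\ge 1$ (the strict inequality between runs is what enforces $r(W)=r$, i.e., that no two runs merge). Substituting $g_i'=g_i-1$ for $1\le i\le r-1$ turns this into a system of non-negative integers summing to $t-w-(r-1)$ in $r+1$ variables; the number of such solutions is
\begin{equation*}
  \binom{(t-w-(r-1))+r}{r} = \binom{t-w+1}{r}.
\end{equation*}
Multiplying the two independent counts gives the claimed formula. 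There is no real obstacle here: the only point that deserves a line of justification is that the gap conditions (positive internal, non-negative external) correctly encode the requirement $r(W)=r$, since they ensure the $r$ runs are genuinely maximal and distinct.
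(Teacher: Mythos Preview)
Your proof is correct and is essentially identical to the paper's own argument: it too decomposes the count into compositions $(a_1,\ldots,a_r)$ of $w$ (giving $\binom{w-1}{r-1}$) and placements encoded by the gap system $b_0+\cdots+b_r=t-w$ with $b_0,b_r\ge 0$ and internal $b_i\ge 1$ (giving $\binom{t-w+1}{r}$). The only difference is that you spell out the stars-and-bars substitution explicitly, which the paper leaves implicit.
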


\begin{proof}
  For every $\mathbf{a}=(a_i)_{i=1}^r$ with $a_i>0$ and $\sum_{i=1}^r a_i=w$,
  let $\mathcal{W}_\mathbf{a}$ be the set of $W$'s in $\mathcal{W}_{w,r}$ with
  run lengths $a_1,\ldots,a_r$.  The number of $W$'s in $\mathcal{W}_\mathbf{a}$
  is the number of ways to locate $r$ runs with lengths $a_1,\ldots,a_r$ in
  $[t]$ so that any two distinct runs will be separated by at least $1$.  For
  every $\mathbf{a}$, this number is the number of integer solutions to 
  the equation
  \begin{equation*}
    \sum_{i=0}^r b_i = t-w,\qquad
    \begin{cases}
      b_0,b_r\ge 0\\
      b_i\ge 1 & 1\le i\le r-1,
    \end{cases}
  \end{equation*}
  where we think of $b_0$ as the space before the first run, $b_r$ the space
  after the last run, and for $1\le i\le r-1$, $b_i$ is the space between the
  $i$'th run and the one following it.  Thus
  \begin{equation*}
    \left|\mathcal{W}_\mathbf{a}\right| = \binom{t-w+1}{r}.
  \end{equation*}
  Since the number of $\mathbf{a}$'s with $a_i>0$ and $\sum_{i=1}^r a_i=w$ is
  the number of integer solutions to the equation
  \begin{equation*}
    \sum_{i=1}^r a_i = w,\qquad
    \forall 1\le i\le r,\ a_i>0,
  \end{equation*}
  it follows that
  \begin{equation*}
    \left|\mathcal{W}_{w,r}\right| =
    \binom{w-1}{r-1}\binom{t-w+1}{r}.
  \end{equation*}
\end{proof}

\begin{lemma}\label{scattered}
  Let $K>0$ be fixed, let $r\le w\le K$ and suppose $t\gg 1$.
  Sample $W$ uniformly from $\mathcal{W}_{w,r}$.  Then,
  \begin{equation*}
    \pr{q(W)\ge q} = O\left(\left(Bt^{-1}\right)^q\right).
  \end{equation*}
\end{lemma}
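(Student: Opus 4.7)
The plan is to reduce the statement to a direct enumeration argument using the same gap-parameterization introduced in the proof of \cref{ws:count}. Recall that elements of $\mathcal{W}_{w,r}$ are in bijection with pairs $(\mathbf{a}, \mathbf{b})$, where $\mathbf{a}=(a_1,\ldots,a_r)$ with $a_i\ge 1$ and $\sum a_i=w$ encodes the run lengths, and $\mathbf{b}=(b_0,\ldots,b_r)$ is the sequence of gaps, with $b_0,b_r\ge 0$, $b_i\ge 1$ for $1\le i\le r-1$, and $\sum b_i = t-w$. In these coordinates, the condition that the $(i+1)$'th run is defective translates precisely into $b_i<3B$, and $q(W)$ becomes the number of indices $i\in[1,r-1]$ for which $b_i\in\{1,\ldots,3B-1\}$.

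Given this, I would union-bound over the choice of which $q$ indices witness the defectiveness. First I fix a subset $T\subseteq[1,r-1]$ with $|T|=q$; there are $\binom{r-1}{q}=O_K(1)$ such sets since $r\le w\le K$. For a fixed $T$ and a fixed run-length vector $\mathbf{a}$, I would bound the number of valid gap sequences in which $b_i<3B$ for every $i\in T$ as follows: there are at most $(3B)^q$ choices for the gaps indexed by $T$, and once these are fixed the remaining $r+1-q$ nonnegative gaps must sum to some value $V\le t-w$, which (after the standard shift $b_i\mapsto b_i-1$ for the indices in $[1,r-1]\setminus T$) gives at most $\binom{t-w+1}{r-q}=O_K(t^{r-q})$ possibilities.

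Summing over the $\binom{w-1}{r-1}=O_K(1)$ choices of $\mathbf{a}$ and the $\binom{r-1}{q}=O_K(1)$ choices of $T$, the numerator is $O_K(B^q t^{r-q})$. Dividing by $|\mathcal{W}_{w,r}|=\binom{w-1}{r-1}\binom{t-w+1}{r}=\Theta_K(t^r)$ (which uses $t\gg 1$ so that $\binom{t-w+1}{r}=\Theta(t^r)$) gives the desired bound $O\bigl((B/t)^q\bigr)$.

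There is no real obstacle here; the only mild care needed is in the bookkeeping, specifically making sure that the $(3B)^q$ upper bound for the small gaps and the $\binom{t-w+1}{r-q}$ bound for the remaining gaps correctly account for the boundary terms $b_0,b_r$ (which have no lower bound) versus the interior terms (which require $b_i\ge 1$). Since $w$ and $r$ are bounded by the constant $K$, all implicit constants depend only on $K$, which is consistent with the form of the claimed bound.
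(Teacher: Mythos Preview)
Your proposal is correct and follows essentially the same approach as the paper: union-bound over the $\binom{r-1}{q}$ choices of defective indices, bound the small gaps by $(3B)^q$ and the remaining gaps by $O(t^{r-q})$ via the same stars-and-bars count from \cref{ws:count}, and divide by $\left|\mathcal{W}_{w,r}\right|=\Theta(t^r)$. The only cosmetic difference is that the paper fixes the actual values $(b_j)_{j\in J}$ of the defective gaps before counting, whereas you absorb this into the single factor $(3B)^q$; you are also slightly more explicit about the $\binom{w-1}{r-1}=O_K(1)$ factor from the run-length vector $\mathbf{a}$.
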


\begin{proof}
  Given a set $J\subseteq[r-1]$ with $|J|=q$, $I=[r-1]\smallsetminus J$
  and $\mathbf{b}=(b_j)_{j\in J}$ with $1\le b_j < 3B$ for $j\in [q]$,
  let $A_{J,\mathbf{b}}$ be the set of $W\in\mathcal{W}_{w,r}$ for which
  for every $j\in J$, $t_{j+1}-(t_j+a_j)=b_j$.
  The cardinality of
  $A_{J,\mathbf{b}}$ is the number of solutions to the integer equation
  \begin{equation*}
    b_0+b_r+\sum_{i\in I} b_i = t-w-\sum_{j\in J}b_j,\qquad
    \begin{cases}
      b_0,b_r\ge 0\\
      b_i\ge 1 & i\in I,
    \end{cases}
  \end{equation*}
  which is clearly at most the number of integer solutions to the equation
  \begin{equation*}
    b_0+b_r+\sum_{i\in I} b_i = t,\qquad
    \begin{cases}
      b_0,b_r\ge 0\\
      b_i\ge 1 & i\in I.
    \end{cases}
  \end{equation*}
  It was shown in \cref{ws:count} that $\left|\mathcal{W}_{w,r}\right|
  =\Theta\left(t^r\right)$.  By a similar argument,
  $\left|A_{J,\mathbf{b}}\right| = O\left(t^{r-q}\right)$.
  The union bound over all choices of $J$ and $\mathbf{b}$ yields
  \begin{equation*}
    \pr{q(W)\ge q}
    \le
    \binom{r-1}{q} (3B)^q \cdot
    \frac{O\left(t^{r-q}\right)}{\Theta\left(t^r\right)}
    = O\left(\left(Bt^{-1}
    \right)^q\right).
  \end{equation*}
\end{proof}

For $i\in [t]$ let $e_i=\left\{X_{i-1},X_i\right\}$ and let
$\vec{e}_i = \left(X_{i-1},X_i\right)$.
For a fixed subgraph $H$ of $G$ let $W(H)\subseteq [t]$ be the (random) set
of times in which an edge from $H$ had been traversed.  That is,
\begin{equation*}
  W(H) = \left\{i\in[t]\mid e_i \in E(H)\right\}.
\end{equation*}

We are now ready to prove our key lemma.

\subsection{Proof of \texorpdfstring{\cref{key_lemma}}{Lemma \ref{key_lemma}}}
\label{section:key_lemma:proof}
Let $\varepsilon,\gamma>0$, $p\ge n^{-1+\varepsilon}$, $G\sim\gnp$ and
$p^{-1}\ll t=O\left(n^{2-\gamma}p\right)$.  As promised in 
\cref{pseudorandom}, we assume that $G$ possesses the properties guaranteed 
\whp{} by \cref{gnp:regular,gnp:mixing,gnp:multiplicity}.
Let $H$ be a fixed graph with $\ell\ge 1$ edges, $k$ vertices and 
$\td(H)=\td$, and let $H_0$ be a copy of $H$ in $G$.
Let $A$ be the event $H_0\subseteq\Gamma_t$, and for any $W\subseteq[t]$ let
$A_W$ be the event $A\land\left(W\left(H_0\right)=W\right)$.  
Our goal now is to estimate $\pr{A}$.

\begin{claim}\label{W:props}
  If $\pr{A_W}$ is positive then
  \begin{itemize}
    \item $\ell\le |W|\le t$,
    \item $1\le r(W)\le |W|$,
    \item $0\le q(W) < r(W)$, and
    \item $r(W)\ge \ell + \td - |W|$.
  \end{itemize}
\end{claim}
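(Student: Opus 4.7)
The plan is to treat the four bullets in turn: the first three follow directly from definitions, while the fourth calls for a short trail-decomposition argument. Throughout I would condition on the event $A_W$ and fix a realization of the walk with $W(H_0)=W$ and $H_0\subseteq\Gamma_t$, then read the desired inequalities off this realization.

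For the first three bullets the arguments are essentially bookkeeping. Since each of the $\ell$ edges of $H_0$ must be traversed at least once by the walk, $|W|\ge\ell$, and $|W|\le t$ is immediate. Because $|W|\ge\ell\ge 1$, $W$ is nonempty, so $r(W)\ge 1$; and $r(W)\le|W|$ since each run contains at least one element. Finally $q(W)$ is defined as the cardinality of a subset of $[r(W)-1]$, so $0\le q(W)<r(W)$.

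The content lies in the fourth bullet. Writing $r=r(W)$, I would label the runs of $W$ as $w_1,\ldots,w_r$, viewing each $w_i$ as a contiguous sub-walk of $(X_0,\ldots,X_t)$ which traverses $a_i$ edges of $H_0$, so that $\sum_i a_i=|W|$. Each $w_i$ is a walk on $H_0$, since within a run consecutive edges share a vertex and both lie in $E(H_0)$. I then call an edge occurrence inside $w_i$ a \emph{repeat} if the same edge of $H_0$ appears at an earlier position in the concatenation of the $w_i$'s. Since $H_0\subseteq\Gamma_t$ forces every edge of $H_0$ to occur at least once, the total number of repeats across all runs is exactly $|W|-\ell$.

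Next I would split each $w_i$ at its repeat positions: deleting the $k_i$ repeat edges of $w_i$ breaks it into at most $k_i+1$ contiguous sub-walks. Each nonempty piece is a trail on $H_0$, since a second occurrence of the same edge inside one piece would itself be a repeat and hence a deletion point. Moreover every edge of $H_0$ belongs to exactly one piece, namely the one carrying its first occurrence. So the nonempty pieces form a trail decomposition of $H_0$ with at most $\sum_i(k_i+1)=r+(|W|-\ell)$ parts, giving $\td\le r(W)+|W|-\ell$ and hence $r(W)\ge\ell+\td-|W|$, as required. The only slightly delicate step is the verification that splitting at repeat positions really produces sub-walks with pairwise distinct edges; everything else is straightforward bookkeeping on first occurrences versus repeats.
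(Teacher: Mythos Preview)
Your proposal is correct and follows essentially the same route as the paper: for the fourth bullet you split each run of $W$ at the positions where an already-seen edge of $H_0$ is retraversed, observe that the surviving pieces are trails whose union is $E(H_0)$, and count at most $r+|W|-\ell$ pieces. The paper phrases the same idea by letting $\ell_i$ be the number of edges first seen in run $i$ and bounding the pieces of run $i$ by $1+(w_i-\ell_i)$; your ``repeat'' count $k_i$ equals their $w_i-\ell_i$, so the two arguments are the same up to notation.
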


\begin{proof}
  The only non-obvious claim is that $r(W)\ge \ell + \td - |W|$.  We will prove
  it by decomposing $H_0$ into at most $|W|+r(W)-\ell$ trails.  Suppose
  $W_1,\ldots,W_r$ are the $r=r(W)$ runs of $W$, and let $w_1,\ldots,w_r$ be
  their lengths.  Let $\ell_i$ be the number of edges of $H_0$ that were 
  traversed by $W_i$ but not by $W_j$ for $j<i$.  By removing from $W_i$ every 
  edge that was previously traversed by either $W_i$ or by an earlier run, we 
  create at most $1+(w_i-\ell_i)$ edge-disjoint trails, which are disjoint to 
  every trail created so far.  At the end of this process we have created at 
  most
  \begin{equation*}
    \sum_{i=1}^r (1+w_i-\ell_i) = r+|W|-\ell
  \end{equation*}
  edge-disjoint trails covering $H$.
\end{proof}

As a result of \cref{W:props}, letting $r_w=\max\left\{1,\ell+\td-w\right\}$,
we have:
\begin{equation}\label{eq:pa}
  \pr{A} = \sum_{w=\ell}^t \sum_{r=r_w}^w \sum_{q=0}^{r-1}
  \sum_{W\in\mathcal{W}_{w,r,q}} \pr{A_W}.
\end{equation}

\subsubsection*{Upper bound}
Let $M>0$ be such that the probability that any edge was traversed at least
$M$ times is $o\left(n^{-3\ell}\right)$, as guaranteed by
\cref{gnp:multiplicity}, and let $K=\ell M$.  Write
  
\begin{equation*}
  \Lambda_{w,r,q} = \sum_{W\in\mathcal{W}_{w,r,q}}\pr{A_W},\qquad
  \Lambda_{w,r} = \sum_{q=0}^{r-1} \Lambda_{w,r,q},\qquad
  \Lambda_{w,r}^+ = \Lambda_{w,r} - \Lambda_{w,r,0},
\end{equation*}
and
\begin{equation*}
  \Lambda_1 = \sum_{w=K}^t\sum_{r=r_w}^w \Lambda_{w,r},\qquad
  \Lambda_2 = \sum_{w=\ell+1}^{K-1}\sum_{r=r_w}^w \Lambda_{w,r},\qquad
  \Lambda_3 = \sum_{r=\td}^\ell \Lambda_{\ell,r},
\end{equation*}
so, noting that $r_\ell=\td$ it follows from \eqref{eq:pa} that
\begin{equation}\label{eq:lambdas}
  \pr{A} = \Lambda_1+\Lambda_2+\Lambda_3.
\end{equation}
Now, according to the choice of $K$, we have that
\begin{equation}\label{eq:lambda1}
  \Lambda_1 \ll n^{-3\ell} \ll
  (np)^{-\ell} \sum_{r=\td}^\ell\left(\frac{t}{n}\right)^r.
\end{equation}
Let $W\in\mathcal{W}_{w,r,q}$ with $w<K$.  In these settings,
\begin{equation}\label{eq:key:AW}
  \pr{A_W} \le \pr{W\subseteq W\left(H_0\right)}
  = O\left( n^{-r+q}\left(np\right)^{-w-q}\right),
\end{equation}
as at the beginning of any non-defective run the probability that the walk
will be at a vertex of $H_0$ is $\Theta\left(1/n\right)$ (and there are $r-q$
non-defective runs), at the beginning of any defective run the probability
that the walk will be at a vertex of $H_0$ is
$O\left(1/\left(np\right)\right)$, and at any time of $W$, the probability
that the walk will traverse an edge of $H_0$ is
$O\left(1/\left(np\right)\right)$.

If $w<K$, it follows from \cref{ws:count,scattered} that
\begin{equation}\label{eq:Wwrq}
  \left|\mathcal{W}_{w,r,q}\right| = O\left(\frac{t^rB^q}{t^q}\right),
\end{equation}
and therefore it follows from \eqref{eq:key:AW} and since $B\ll tp$, that
\begin{align*}
  \Lambda_{w,r}^+
  &= \sum_{q=1}^{r-1}
  O\left(t^r\left(Bt^{-1}\right)^q n^{-r+q}\left(np\right)^{-w-q}\right)\\
  &= O\left( (np)^{-w}\left(\frac{t}{n}\right)^r
  \sum_{q=1}^{r-1}\left(\frac{B}{tp}\right)^q
  \right)
  \ll (np)^{-w}\left(\frac{t}{n}\right)^r,
  \numberthis \label{eq:lambda:defective}
\end{align*}
and
\begin{equation*}
  \Lambda_{w,r,0} = O\left((np)^{-w}\left(\frac{t}{n}\right)^r\right),
\end{equation*}
and therefore
\begin{equation}\label{eq:lambdawr}
  \Lambda_{w,r} = O\left((np)^{-w}\left(\frac{t}{n}\right)^r\right).
\end{equation}
Suppose that $\ell<w<K$.
If $t\ge n$ then, since $t\ll n^2p$ and using \eqref{eq:lambdawr},
\begin{equation*}
  \sum_{r=r_w}^w\Lambda_{w,r}
  = O\left(\left(np\right)^{-w} \left(\frac{t}{n}\right)^w\right)
  \ll \left(np\right)^{-\ell} \left(\frac{t}{n}\right)^\ell
  =\Theta\left(
  (np)^{-\ell} \sum_{r=\td}^\ell\left(\frac{t}{n}\right)^r \right),
\end{equation*}
and if $t<n$ then, since $t\gg p^{-1}$ and using \eqref{eq:lambdawr},
\begin{align*}
  \sum_{r=r_w}^w\Lambda_{w,r}
  &= O\left( (np)^{-w} \left(\frac{t}{n}\right)^{r_w} \right)\\
  &= O\left(\left(np\right)^{-\ell}\left(\frac{t}{n}\right)^\td
     \cdot \left(\frac{t}{n}\right)^{\ell-w}(np)^{\ell-w} \right)\\
  &\ll (np)^{-\ell} \left(\frac{t}{n}\right)^\td
  =\Theta\left(
  (np)^{-\ell} \sum_{r=\td}^\ell\left(\frac{t}{n}\right)^r \right),
\end{align*}
and therefore
\begin{equation}\label{eq:lambda2}
  \Lambda_2 \ll \left(np\right)^{-\ell}
  \sum_{r=\td}^\ell\left(\frac{t}{n}\right)^r.
\end{equation}
Finally, using \eqref{eq:lambdawr},
\begin{equation}\label{eq:lambda3}
  \Lambda_3 = O\left(
    (np)^{-\ell} \sum_{r=\td}^\ell\left(\frac{t}{n}\right)^r
  \right),
\end{equation}
and therefore, using \eqref{eq:lambdas}, \eqref{eq:lambda1}, 
\eqref{eq:lambda2} and \eqref{eq:lambda3},
\begin{equation*}\label{eq:pa:upper}
  \pr{A} = O\left(
    (np)^{-\ell} \sum_{r=\td}^\ell\left(\frac{t}{n}\right)^r
  \right).
\end{equation*}
This concludes the proof of the upper bound of the first part of the lemma.

\subsubsection*{Lower bound}
Let $\Gamma_W = \left\{ \{X_{i-1},X_i\} \mid i\in W \right\}$.

\begin{claim}\label{notinw}
For $W\in \mathcal{W}_{\ell,r,0}$,
\begin{equation*}
  \pr{A_W} \sim \pr{H_0\subseteq \Gamma_W}.
\end{equation*}
\end{claim}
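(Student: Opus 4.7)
The plan is to combine the trivial inclusion $A_W\subseteq\{H_0\subseteq\Gamma_W\}$, which yields the upper bound $\pr{A_W}\le\pr{H_0\subseteq\Gamma_W}$, with a matching lower bound obtained by conditioning on $\{H_0\subseteq\Gamma_W\}$ and showing that the ``bad'' event---that some $i\in[t]\setminus W$ satisfies $e_i\in E(H_0)$---has conditional probability $o(1)$. Since $|W|=\ell=|E(H_0)|$, the event $\{H_0\subseteq\Gamma_W\}$ forces a bijection between the $\ell$ times of $W$ and the $\ell$ edges of $H_0$, and in particular pins the walk at the start and end of each run to specific vertices of $H_0$; by the Markov property, the walk inside each free gap is then a bridged random walk between two pinned endpoints.

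I would control the conditional expected number of $H_0$-edge traversals outside $W$ and apply Markov's inequality. Inside a gap of length $g$ with pinned endpoints $v_j,v_{j+1}$, the conditional probability that $e_i\in E(H_0)$ for $i$ at position $s\in[1,g]$ of the gap equals
\[
\sum_{(u,v)\in E(H_0)}\frac{p^{(s-1)}_{v_j,u}\,p_{uv}\,p^{(g-s)}_{v,v_{j+1}}}{p^{(g)}_{v_j,v_{j+1}}}.
\]
Because $q(W)=0$ we have $g\ge 3B$ for every gap, and then \cref{gnp:mixing} gives $p^{(g)}_{v_j,v_{j+1}}=\Theta(1/n)$. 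I would split the positions of each gap into an \emph{interior} regime $\min(s-1,g-s)\ge B$, where both $p^{(s-1)}_{v_j,u}$ and $p^{(g-s)}_{v,v_{j+1}}$ equal $\Theta(1/n)$ by mixing, and a \emph{boundary} regime of $O(B)$ positions, where one of these factors may only be $O(1/(np))$ via the uniform non-mixing estimate already used in the derivation of \eqref{eq:key:AW}.

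This yields per-$i$ conditional probabilities of $O(1/(n^2p))$ in the interior and $O(1/(n^2p^2))$ on the boundary. Summing interior contributions (at most $t$ positions across all gaps) gives $O(t/(n^2p))=O(n^{-\gamma})$ by the assumption $t=O(n^{2-\gamma}p)$, while summing boundary contributions ($O(rB)$ positions across all gaps) gives $O(B/(n^2p^2))=O(\ln n/n^{2\varepsilon})$ using $p\ge n^{-1+\varepsilon}$ and $B=O(\ln n)$; both are $o(1)$, so Markov's inequality yields $\pr{\text{bad}\mid H_0\subseteq\Gamma_W}=o(1)$ and hence $\pr{A_W}\sim\pr{H_0\subseteq\Gamma_W}$. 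The main obstacle is the boundary analysis, where the bridged walk has not fully mixed; the hypothesis $q(W)=0$ is essential there because it confines each boundary segment strictly inside a post-mixing gap and limits its length to $O(B)$, so that the extra $1/p$ factor is absorbed by the logarithmic size of $B$.
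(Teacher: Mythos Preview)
Your approach is essentially the paper's: write $\pr{A_W}=\pr{W(H_0)\subseteq W\mid H_0\subseteq\Gamma_W}\cdot\pr{H_0\subseteq\Gamma_W}$, split $[t]\setminus W$ into positions far from $W$ (interior) and positions within $B$ of $W$ (boundary), bound the conditional probability of an $H_0$-edge traversal at each time, and sum. The paper obtains $O(1/(n^2p))$ per interior position (as you do) and $O(1/(np))$ per boundary position, giving a total of $O\bigl(t/(n^2p)+B/(np)\bigr)=o(1)$.

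There is one slip in your boundary estimate. Your claim that the non-mixed factor $p^{(s-1)}_{v_j,u}$ is $O(1/(np))$ is valid only for $s\ge 2$, since then $p^{(s-1)}_{v_j,u}\le\max_w p_{w,u}=O(1/(np))$; but for $s=1$ this factor is $\mathbf{1}[u=v_j]$, which equals $1$ when $u=v_j$. At that position (and symmetrically at $s=g$) the per-$i$ bound is only $O(1/(np))$, not $O(1/(n^2p^2))$. This does not break the argument---these $O(r)=O(1)$ extreme positions contribute $O(1/(np))=o(1)$, which is exactly the paper's boundary bound---but you need to single them out rather than absorb them into the $O(1/(n^2p^2))$ estimate. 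You should also explicitly treat the two half-gaps before the first run and after the last run (only one pinned endpoint); the paper does this separately and it goes through in the same way.
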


\begin{proof}
First note that
\begin{align*}
  \pr{A_W}
  &= \pr{(W(H_0)=W)\wedge (H_0\subseteq\Gamma_W) }\\
  &= \pr{W(H_0)\subseteq W\mid H_0\subseteq\Gamma_W}
     \cdot\pr{H_0\subseteq\Gamma_W}.
\end{align*}
Now, conditioning on 
$H_0\subseteq\Gamma_W$, the probability
that an edge of $H_0$ is ever traversed during times not in $W$, can be bounded
from above as follows.  Let
\begin{equation*}
  W_B = \left\{s\in [t]\mid \exists s'\in W,\ \left|s-s'\right|\le B \right\}.
\end{equation*}
Let $\vec{e}=(u,v)$ be an arbitrary edge of $H_0$ with the direction assigned
to it.
Let $i\in [t]\smallsetminus W_B$, and assume first that $i$ is between two
consecutive runs of $W$.
Let $i_0$ be the maximal element in $W$ with
$i_0<i$, and let $i_1$ be the minimal element in $W$ with $i<i_1$.
Write $s_0=i-i_0$, $s_1=i_1-i$.
Observing that for every two vertices $v_1,v_2$ and $s>B$ we have
$p_{v_1v_2}^s \sim n^{-1}$, we have that for every $u_0,u_1$,
\begin{align*}
  \pr{X_{i-1}=u\mid X_i=v,\ X_{i_0}=u_0}
  &= \frac{p_{u_0u}^{s_0-1}p_{uv}}
          {\sum_{w\in N^+(v)} p_{u_0w}^{s_0-1}p_{wv}}\\
  &\sim \frac{p_{uv}}{\sum_{w\in N^+(v)} p_{wv}}
   \sim \frac{p_{vu}}{\sum_{w\in N^+(v)} p_{vw}}
   = p_{vu} \sim \frac{1}{np},
\end{align*}
and
\begin{equation*}
  \pr{X_i=v\mid X_{i_0}=u_0,\ X_{i_1}=u_1}
  = \frac{p_{u_0v}^{s_0}p_{vu_1}^{s_1}}
             {\sum_{w\in[n]} p_{u_0w}^{s_0}p_{wu_1}^{s_1}}
  \sim \frac{1}{n},
\end{equation*}
thus
\begin{align*}
  &\pr{\vec{e}_i=\vec{e}\mid H_0\subseteq\Gamma_W, X_{i_0}=u_0, X_{i_1}=u_1}\\
  &= \pr{X_{i-1}=u, X_i=v\mid X_{i_0}=u_0, X_{i_1}=u_1}\\
  &= \pr{X_{i-1}=u\mid X_i=v,\ X_{i_0}=u_0}
     \cdot \pr{X_i=v\mid X_{i_0}=u_0,\ X_{i_1}=u_1}
     \sim \frac{1}{n^2p}.
\end{align*}
Since this holds for every $u_0,u_1$, the probability that
$i\in W\left(H_0\right)$ is $O\left(1/\left(n^2p\right)\right)$.
Now let $i\in W_B\smallsetminus W$,
and let $i_0,i_1$ and $s_0,s_1$ be as before.
Since $W\in\mathcal{W}_{\ell,r,0}$, $s_0+s_1\ge 3B$.  Suppose first that 
$s_0>B$.  In that case,
\begin{align*}
  \pr{\vec{e}_i=\vec{e}\mid H_0\subseteq\Gamma_W, X_{i_0}=u_0, X_{i_1}=u_1}
  &= \pr{X_{i-1}=u, X_i=v\mid X_{i_0}=u_0, X_{i_1}=u_1}\\
  &\le \pr{X_{i-1}=u\mid X_i=v,\ X_{i_0}=u_0}
  \sim \frac{1}{np}.
\end{align*}

If on the other hand $s_0\le B$ then $s_1>B$ and we may use the reversibility
of the walk to obtain a similar bound for
$\pr{\vec{e}_i=\vec{e}\mid H_0\subseteq\Gamma_W}$, and therefore, since this
holds for every $u_0,u_1$, the probability that $i\in W(H_0)$ is $O(1/np)$.

If $i<\min W$ (or $i>\max W$), letting $i_1$ ($i_0$, respectively) be as 
before, a similar argument, now conditioning only on the location of $X$ at 
time $i_1$ (at time $i_0$, respectively), gives the same bounds.

Since $\left|W_B\right| = O\left(B\right)$, $B\ll np$ and $t\ll n^2p$ we 
have that
\begin{align*}
  \pr{W\left(H_0\right)\not\subseteq W
    \mid H_0\subseteq\Gamma_W}
  &= \pr{\exists i\notin W,\ i\in W(H_0) \mid H_0\subseteq\Gamma_W}\\
  &= O\left( B(np)^{-1} + t\left(n^2p\right)^{-1} \right) = o(1),
\end{align*}
and thus
\begin{equation*} 
  \pr{A_W} \sim \pr{H_0\subseteq\Gamma_W}.\qedhere
\end{equation*}
\end{proof}

Now, let $W\in \mathcal{W}_{\ell,r,0}$ with $\td\le r\le \ell$.
In this case,
\begin{equation*}
  \pr{H_0\subseteq \Gamma_W} = 
  \Omega\left( (np)^{-\ell} n^{-r}\right).
\end{equation*}
This can be seen as follows.  Let
\begin{equation*}
  f^1_1,\ldots,f^1_{\ell_1},\ldots,f^r_1,\ldots,f^r_{\ell_r}
\end{equation*}
be a decomposition of the edges of $H_0$ into $r$ trails (think of the 
edges $f^j_i$ as directed edges, with the direction induced by the $j$'th 
trail), and write $f^j_i=(u^j_i,v^j_i)$.  At the beginning of the $j$'th run of 
$W$ (which is non-defective),
the probability that the walk will be at $u^j_1$ is $\Omega(1/n)$, and the
$i$'th time in the $j$'th run of $W$, the probability that the traversed edge
is $f^j_i$, given that the location of the walk before that move is $u^j_i$, is
$\Omega(1/(np))$.  
Using \cref{notinw} we have that
\begin{equation*}
  \pr{A_W} = \Omega\left( (np)^{-\ell} n^{-r}\right).
\end{equation*}
Therefore,
\begin{equation*}
  \Lambda_{\ell,r}
  \ge \Lambda_{\ell,r,0}
  = \sum_{W\in\mathcal{W}_{\ell,r,0}}\pr{A_W}
  = \Omega\left( (np)^{-\ell} \left(\frac{t}{n}\right)^r \right),
\end{equation*}
and thus
\begin{equation*}
  \Lambda_3 = \Omega\left(
    (np)^{-\ell}\sum_{r=\td}^\ell \left(\frac{t}{n}\right)^r
  \right).
\end{equation*}
Using \eqref{eq:lambdas} we have that
\begin{equation*}
  \pr{A} = \Omega\left(
    (np)^{-\ell}\sum_{r=\td}^\ell \left(\frac{t}{n}\right)^r
  \right).
\end{equation*}
This concludes the proof of the lower bound of the first part of the lemma.

\subsubsection*{The case $t\gg n$}
In this case, according to \eqref{eq:lambda1},
\begin{equation}\label{eq:lambda1:tggn}
  \Lambda_1 \ll \left(\frac{t}{n^2p}\right)^\ell,
\end{equation}
and according to \eqref{eq:lambda2},
\begin{equation}\label{eq:lambda2:tggn}
  \Lambda_2 \ll \left(\frac{t}{n^2p}\right)^\ell.
\end{equation}

Let $W\in \mathcal{W}_{\ell,\ell,0}$.  In this case we can give a more 
accurate estimate on $\pr{A_W}$.  There are $\ell!$ ways to order the edges of 
$H_0$ by their traversal times, and for each such ordering, as all the runs are
non-defective and of length $1$, the probability 
that the walk will traverse an edge at a prescribed time is approximately the 
inverse of the number of edges in $G$.  Therefore, using \cref{notinw}, we have 
that
\begin{equation*}
  \pr{A_W} \sim \ell! \cdot \left(\frac{2}{n^2p}\right)^\ell.
\end{equation*}
According to \cref{ws:count} and \eqref{eq:Wwrq},
\begin{equation*}
  \left|\mathcal{W}_{\ell,\ell,0}\right|
  \sim \binom{\ell-1}{\ell-1} \binom{t-\ell+1}{\ell}
  = \binom{t-\ell+1}{\ell}
\end{equation*}
and thus
\begin{equation*}
  \Lambda_{\ell,\ell,0}
  \sim \binom{t-\ell+1}{\ell} \cdot \ell!
    \cdot\left(\frac{2}{n^2p}\right)^\ell
  \sim \left(\frac{2t}{n^2p}\right)^\ell.
\end{equation*}
It follows from \eqref{eq:lambda:defective} that
\begin{equation*}
  \Lambda_{\ell,\ell}^+
  \ll \left(\frac{t}{n^2p}\right)^\ell,
\end{equation*}
hence
\begin{equation*} 
  \Lambda_{\ell,\ell} 
  = \Lambda_{\ell,\ell,0} + \Lambda_{\ell,\ell}^+
  \sim \left(\frac{2t}{n^2p}\right)^\ell.
\end{equation*}
Now suppose that $\td\le r<\ell$.  It follows from \eqref{eq:lambdawr} that
\begin{equation*}
  \Lambda_{\ell,r}
  = O\left( (np)^{-\ell} \left( \frac{t}{n} \right)^r \right)
  \ll \left(\frac{t}{n^2p}\right)^\ell,
\end{equation*}
thus
\begin{equation}\label{eq:lambda3:tggn}
  \Lambda_3 \sim \Lambda_{\ell,\ell}
  \sim \left(\frac{2t}{n^2p}\right)^\ell.
\end{equation}
It follows from \eqref{eq:lambdas}, together with \eqref{eq:lambda1:tggn},
\eqref{eq:lambda2:tggn} and \eqref{eq:lambda3:tggn}, that if $t\gg n$,
\begin{equation*}
  \pr{A_W} \sim \left(\frac{2t}{n^2p}\right)^\ell,
\end{equation*}
concluding the proof of the second part of the lemma.\qed

\subsection{Proof of \texorpdfstring{\cref{gnp:trace:subgraphs}}
  {Theorem \ref{gnp:trace:subgraphs}}}
\label{proof:gnp:trace:subgraphs}
Throughout this subsection $H$ is a fixed graph with $k$
vertices, $\ell$ edges and $\md(H)=\md\ge 1$, $\varepsilon>0$, $p\ge
n^{-1/\md+\varepsilon}$ and $G$ is sampled according to $\gnp$.

\subsubsection{Proof of the negative part}
\label{proof:gnp:trace:subgraphs:negative}
Assume $t\ll n^{2-1/\md}$.
Since $p^{-1}\le n^{1/m_0-\varepsilon}\ll n\le n^{2-1/\md}$ we may 
assume without loss of generality that $t\gg p^{-1}$.
In addition, letting $\gamma\le\varepsilon$ we have that
$t=O(n^{2-\gamma}p)$.
Let $H'\subseteq H$ with $k_0$ vertices and
$\ell_0$ edges be such that $\ell_0/k_0=\md$, and write
$\td=\td\left(H'\right)$.
Let $Z,Z'$ count the number of appearances of a copy of $H,H'$ in $\Gamma_t$, 
respectively. From \cref{key_corollary} it follows that \whp{}
\begin{equation*}
  \E{Z'\mid G} = O\left(
  n^{k_0-\ell_0}\sum_{r=\td}^{\ell_0}\left(\frac{t}{n}\right)^r \right).
\end{equation*}

Now, if $\md=1$ then $k_0=\ell_0$ and $t\ll n$ and thus \whp{} 
$\E{Z'\mid G}=o(1)$.
If $\md>1$ then $k_0-\ell_0\le -1$; in that case, if $t<n$ then \whp{}
$\E{Z'\mid G}=O\left(n^{-1}\right)=o(1)$, and if $t\ge n$ we have that \whp{}
\begin{equation*}
  \E{Z'\mid G} = O\left(n^{k_0-2\ell_0}t^{\ell_0}\right)
  = o\left(n^{k_0-2\ell_0}n^{2\ell_0-k_0}\right) = o(1).
\end{equation*}
Since the non-appearance of a copy of $H'$ in $\Gamma_t$ implies that of $H$,
Markov's inequality yields the desired result.\qed

\subsubsection{Proof of the positive part}
\label{proof:gnp:trace:subgraphs:positive}
Assume $t\gg n^{2-1/\md}\ge n$.
We also assume, without loss of generality, that
$t = O\left(n^{2-\gamma}p\right)$ for sufficiently small $\gamma>0$.
For two graphs $H_1,H_2$ denote by $H_1\cup H_2$ the graph whose vertex set is
$V\left(H_1\right)\cup V\left(H_2\right)$ and whose edge set is
$E\left(H_1\right)\cup E\left(H_2\right)$ (where multiple edges are ignored).
If $H_1,H_2$ are not vertex-disjoint we say they \emph{intersect} and denote
it by $H_1\sim H_2$.

\begin{lemma}\label{gnp:expectation}
  Let $H_1,H_2$ be two intersecting labelled copies of $H$ in $G$, and let 
  $H^*=H_1\cup H_2$.  Let $Z,Z^*$ count the number of appearances of a 
  copy of $H,H^*$ in $\Gamma_t$, respectively.  Then, \whp{},
  \begin{equation*}
    \E{Z^*\mid G} \ll \E^2{Z\mid G}.
  \end{equation*}
\end{lemma}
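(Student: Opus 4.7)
My plan is to apply \cref{key_corollary} to both $H$ and $H^*$ and then directly compare the resulting expressions, using the range assumption $t \gg n^{2-1/\md}$ at the end. First I would verify that the corollary applies to $H^*$: since $H_1$ is a subgraph of $H^*$ isomorphic to $H$, any densest subgraph of $H$ is also a subgraph of $H^*$, so $\md(H^*)\ge \md$, and the hypothesis $p\ge n^{-1/\md(H^*)+\varepsilon}$ follows from $p\ge n^{-1/\md+\varepsilon}$; the constraint $p^{-1}\ll t = O(n^{2-\gamma}p)$ is inherited from the ambient setup of \cref{proof:gnp:trace:subgraphs:positive}.

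Next, since we are in the regime $t\gg n^{2-1/\md}\ge n$, the sum $\sum_{r=\td}^{\ell}(t/n)^r$ appearing in \cref{key_corollary} is dominated by its top term $(t/n)^\ell$, and similarly for $H^*$. Writing $k^* = |V(H^*)|$ and $\ell^* = |E(H^*)|$, I would obtain
\begin{equation*}
  \E{Z\mid G} = \Theta\bigl(n^{k}(t/n^2)^{\ell}\bigr), \qquad
  \E{Z^*\mid G} = \Theta\bigl(n^{k^*}(t/n^2)^{\ell^*}\bigr).
\end{equation*}
Let $j\ge 1$ denote the number of common vertices of $H_1$ and $H_2$ and $i\ge 0$ the number of common edges; then $k^* = 2k - j$ and $\ell^* = 2\ell - i$, and the ratio simplifies to
\begin{equation*}
  \frac{\E{Z^*\mid G}}{\E^2{Z\mid G}}
  = \Theta\bigl(n^{-j}(n^2/t)^{i}\bigr)
  = \Theta\bigl(n^{2i-j}/t^{i}\bigr).
\end{equation*}

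Finally, I would show this ratio is $o(1)$ by splitting on $i$. If $i=0$, the ratio is $\Theta(n^{-j})=o(1)$ because $j\ge 1$. If $i\ge 1$, the intersection $H_1\cap H_2$ is a subgraph of (a copy of) $H$ with $i$ edges and $j$ vertices, so $i/j\le \md$; equivalently $2 - j/i \le 2 - 1/\md$. Since $i$ is a fixed positive integer bounded by $\ell$,
\begin{equation*}
  n^{2i-j} = n^{i(2-j/i)} \le n^{i(2-1/\md)} \ll t^{i}
\end{equation*}
by the assumption $t\gg n^{2-1/\md}$. The main subtlety, and the step I would be careful about, is in the case $i\ge 1$: one must observe that the common edges and vertices of $H_1,H_2$ really form a subgraph of $H$ (as both are labelled copies of $H$), so that the density bound $i/j\le \md$ can legitimately be invoked. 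Everything else is a short calculation using \cref{key_corollary}.
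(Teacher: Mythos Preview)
Your argument is essentially the paper's: apply \cref{key_corollary} to $H$ and to $H^*$ in the regime $t\gg n$, reduce the ratio to $\Theta\bigl(n^{2i-j}t^{-i}\bigr)$ (the paper writes the reciprocal $\Theta\bigl(n^{k'-2\ell'}t^{\ell'}\bigr)$ with $k'=j$, $\ell'=i$), and finish by splitting on whether the intersection has edges, using that $H_1\cap H_2$ is a subgraph of $H$ so its density is at most $\md$. The computations and the case split are identical.

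One slip: your check that \cref{key_corollary} applies to $H^*$ has the inequality the wrong way round. From $\md(H^*)\ge\md$ one gets $-1/\md(H^*)\ge -1/\md$, hence $n^{-1/\md(H^*)+\varepsilon}\ge n^{-1/\md+\varepsilon}$, so knowing $p\ge n^{-1/\md+\varepsilon}$ does \emph{not} yield $p\ge n^{-1/\md(H^*)+\varepsilon}$. The paper does not attempt this verification at all and simply invokes the formula for $\E{Z^*\mid G}$; since only the upper bound on $\E{Z^*\mid G}$ is needed here this is not fatal to the argument, but your stated justification is incorrect.
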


\begin{proof}
  According to \cref{key_corollary}, since $t\gg n$ and since $m_0\ge\ell/k$, 
  \whp{}
  \begin{equation*}
    \E{Z\mid G} = \Theta\left(n^{k-2\ell}t^\ell\right) = \omega(1),
  \end{equation*}
  and thus
  \begin{equation*}
    \E^2{Z\mid G} = \Theta\left(n^{2k-4\ell}t^{2\ell}\right).
  \end{equation*}
  Let $k',\ell'$ be the number of vertices and edges in the intersection
  $H_1\cap H_2$, respectively, and note that $H^*$ has $2k-k'$ vertices and
  $2\ell-\ell'$ edges.  We therefore have that, \whp{},
  \begin{equation*}
    \E{Z^*\mid G} = \Theta\left(
    n^{(2k-k')-2(2\ell-\ell')}t^{2\ell-\ell'} \right)
    = \Theta\left(
    n^{2k-k'-4\ell+2\ell'}t^{2\ell-\ell'}\right),
  \end{equation*}
  and thus
  \begin{equation*}
    \frac{\E^2{Z\mid G}}{\E{Z^*\mid G}}
    = \Theta\left(n^{k'-2\ell'}t^{\ell'}\right),
  \end{equation*}
  so, as $H_1,H_2$ are intersecting, either $\ell'=0$ and $k'>0$, in which
  case the above expression is $\omega(1)$, or $\ell'>0$, in which case
  $t^{\ell'}\gg n^{2\ell'-\ell'/\md}$ and the above expression is
  (since  $\md\ge \ell'/k'$),
  \begin{equation*}
    \omega\left(n^{k'-\ell'/\md}\right) = \omega(1).\qedhere
  \end{equation*}
\end{proof}

The following lemma shows that if two copies of $H$ are not vertex-intersecting,
then the events of their appearances in the trace are almost independent, in the
sense that their covariance is very small.

\begin{lemma}\label{gnp:nonintersecting}
  Let $H_1,H_2$ be two vertex-disjoint labelled copies of $H$ in $G$.
  Let $A_i$ be the event ``$H_i\subseteq\Gamma_t$'', and let $Z_i$ be its
  indicator, $i=1,2$.  Then \whp{}
  \begin{equation*}
    \cov\left(Z_i,Z_j\mid G\right)
    = o\left(t^{2\ell}n^{-4\ell}p^{-2\ell}\right).
  \end{equation*}
\end{lemma}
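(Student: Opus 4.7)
The plan is to express the covariance as the difference of the joint appearance probability and the product of marginal appearance probabilities, and then apply the second part of \cref{key_lemma} to each.

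First, I would observe that since $H_1$ and $H_2$ are vertex-disjoint labelled copies of $H$ in $G$, their union $H^* = H_1 \cup H_2$ is a fixed copy in $G$ of the abstract graph $H \sqcup H$ (two disjoint copies of $H$). This is a fixed graph with $2\ell$ edges, so \cref{key_lemma} applies to it just as it applies to $H$. Crucially, we are in the regime of the positive part of \cref{gnp:trace:subgraphs}, where $t \gg n^{2-1/\md} \ge n$ (since $\md \ge 1$), so we are in the regime $t \gg n$ of the second part of the key lemma.

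Next, since the event $A_1 \cap A_2$ is precisely the event $H^* \subseteq \Gamma_t$, the second part of \cref{key_lemma} gives
\begin{equation*}
  \pr{A_1 \cap A_2 \mid G} = \left(\frac{2t}{n^2 p}\right)^{2\ell}(1+o(1)),
\end{equation*}
and applied to each of $H_1,H_2$ separately,
\begin{equation*}
  \pr{A_i \mid G} = \left(\frac{2t}{n^2 p}\right)^{\ell}(1+o(1)),\quad i=1,2,
\end{equation*}
whence
\begin{equation*}
  \pr{A_1\mid G}\pr{A_2\mid G} = \left(\frac{2t}{n^2 p}\right)^{2\ell}(1+o(1)).
\end{equation*}
Subtracting, the leading terms cancel and we get
\begin{equation*}
  \cov(Z_1,Z_2\mid G)
  = \pr{A_1\cap A_2\mid G} - \pr{A_1\mid G}\pr{A_2\mid G}
  = o\left(\left(\frac{t}{n^2 p}\right)^{2\ell}\right),
\end{equation*}
which is the claimed bound.

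The main obstacle is essentially pre-empted by the sharp (not just order-of-magnitude) asymptotic in the ``moreover'' clause of \cref{key_lemma}: that is exactly what makes this cancellation work. A weaker $\Theta(\cdot)$ estimate would only give a covariance bound of the same order as each marginal squared, which would be useless for the second-moment argument. One minor point to verify is that the key lemma is applicable to the (disconnected) graph $H\sqcup H$; inspecting the statement, it requires only $\ell\ge 1$ and imposes no connectivity hypothesis, so this is immediate. Nothing else is needed beyond invoking the lemma twice and subtracting.
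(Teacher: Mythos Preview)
Your proposal is correct and is essentially the paper's own argument: the paper also applies the second part of \cref{key_lemma} to each $H_i$ and to the vertex-disjoint union $H_1\cup H_2$ (a fixed graph with $2\ell$ edges) in the regime $t\gg n$, and then subtracts to see the leading terms cancel. Your explicit remarks that the lemma imposes no connectivity hypothesis and that $t\gg n^{2-1/\md}\ge n$ places us in the sharp-asymptotic regime are exactly the points the paper uses implicitly.
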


\begin{proof}
  According to \cref{key_lemma} and since $t\gg n$, \whp{},
  \begin{equation*}
    \pr{A_i\mid G}
    \sim \left(2t\right)^\ell\left(n^2p\right)^{-\ell},
  \end{equation*}
  and, since $H_1,H_2$ are vertex disjoint,
  \begin{equation*}
    \pr{A_1\land A_2\mid G}
    \sim \left(2t\right)^{2\ell}\left(n^2p\right)^{-2\ell},
  \end{equation*}
  and finally
  \begin{equation*}
    \pr{A_1\mid G}\cdot\pr{A_2\mid G}
    = \pr^2{A_i\mid G}
    \sim \left(2t\right)^{2\ell}\left(n^2p\right)^{-2\ell},
  \end{equation*}
  thus
  \begin{equation*}
    \cov\left(Z_i,Z_j\mid G\right)
    = o\left(t^{2\ell}n^{-4\ell}p^{-2\ell}\right).\qedhere
  \end{equation*}
\end{proof}

We now employ the second moment method to prove the positive part of the 
theorem.

\begin{proof}[Proof of the positive part of \cref{gnp:trace:subgraphs}]
  Let $Z$ count the number of copies of $H$ in $\Gamma_t$.
  Recall (e.g.\ from the proof of \cref{gnp:expectation}) that \whp{}
  \begin{equation*}
    \E{Z\mid G} = \Theta\left(n^{k-2\ell}t^\ell\right) = \omega(1).
  \end{equation*}
  
  Let $Y$ denote the number of copies of $H$ in $G$, and recall that \whp{} 
  $Y\sim \E{Y}$.
  Let $\mathcal{H}=\{H_1,H_2,\ldots H_Y\}$ be the set of all copies of $H$ in
  $G$, let $Z_i$ be the indicator of the event ``$H_i\subseteq \Gamma_t$'', let
  $\mathcal{U}$ be the set of all possible unions of two intersecting 
  (distinct) copies of $H$, and for $H^*\in\mathcal{U}$, let $Z_{H^*}$
  be the random variable counting the number of copies of $H^*$ in $\Gamma_t$.
  Write $i\sim j$ if $H_i\sim H_j$, and $i\nsim j$ otherwise.
  Since $|\mathcal{U}|=O(1)$, and using \cref{gnp:nonintersecting}, it follows 
  that, \whp{},
  \begin{align*}
    \var{Z\mid G}
    &= \sum_{i=1}^Y\sum_{j=1}^Y\cov\left(Z_i,Z_j\mid G\right)\\
    &= \sum_{i=1}^Y\var{Z_i\mid G}
      + \sum_{i\sim j}\cov\left(Z_i,Z_j\mid G\right)
      + \sum_{i\nsim j}\cov\left(Z_i,Z_j\mid G\right)\\
    &\le \sum_{i=1}^Y \E{Z_i\mid G}
      + \sum_{H_i\sim H_j}\pr{H_i\cup H_j\subseteq \Gamma_t\mid G}
      + o\left(n^{2k}p^{2\ell}\cdot t^{2\ell}n^{-4\ell}p^{-2\ell}\right)\\
    &= \E{Z\mid G}
      + 2\sum_{H^*\in\mathcal{U}}\E{Z_{H^*}}
      + o\left(\E^2{Z\mid G}\right)
    = o\left(\E^2{Z\mid G}\right).
  \end{align*}
  Chebyshev's inequality then yields the desired result.
\end{proof}

\section{Walking on \texorpdfstring{$K_n$}{Kn},
traversing trees}\label{section:kn}
Recall that $\td(G)$ denotes the minimum number of edge-disjoint trails in
$G$ whose union is the edge set of $G$.  In order to prove 
\cref{trace:subtrees}, we will prove the following theorem instead.

\begin{theorem}\label{trace:subtrees2}
  Let $T$ be a fixed tree on at least $2$ vertices with $\td(T)=\td$.
  Let $\Gamma_t$ be the trace of a random walk of length $t$ on $K_n$.
  Then,
  \begin{equation*}
    \lim_{n\to\infty} \pr{T\subseteq \Gamma_t}
    =\begin{cases}
      0 & t \ll n^{1-1/\td}\\
      1 & t \gg n^{1-1/\td}.
    \end{cases}
  \end{equation*}
\end{theorem}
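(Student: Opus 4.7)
The plan is to apply \cref{key_lemma} directly with $p=1$, observing that $G(n,1)=K_n$ deterministically and that the pseudorandom properties used in the proof of that lemma (near-regular degrees, rapid mixing, bounded edge multiplicity) all hold deterministically on $K_n$. Substituting $p=1$, for any fixed copy $T_0$ of $T$ in $K_n$,
\begin{equation*}
  \pr{T_0 \subseteq \Gamma_t} = \Theta\left(n^{-\ell}\sum_{r=\rho}^{\ell}\left(\frac{t}{n}\right)^r\right),
\end{equation*}
where $\ell=k-1$ and $\rho = \td(T)$. Writing $Z$ for the number of labelled copies of $T$ in $\Gamma_t$ and using $k-\ell=1$, linearity of expectation yields $\E{Z} = \Theta\bigl(n\sum_{r=\rho}^{\ell}(t/n)^{r}\bigr)$.

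For the negative direction, assume $t\ll n^{1-1/\rho}$; since $n^{1-1/\rho}\le n$, this in particular forces $t\ll n$, so the sum is dominated by its smallest term $r=\rho$, giving $\E{Z} = \Theta(n^{1-\rho}t^\rho) = o(1)$. Markov's inequality concludes this direction. (The hypothesis $t\gg 1$ of \cref{key_lemma} is automatic except in the trivial case $\rho=1$, $t=O(1)$, in which the trace is too short to possibly contain $T$.)

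For the positive direction, assume $t\gg n^{1-1/\rho}$; by a monotone coupling (restricting to a shorter sub-walk only shrinks the trace), one may assume $t=O(n^{2-\gamma})$ for a small fixed $\gamma>0$, which places us in the range where \cref{key_lemma} applies. The same estimate then gives $\E{Z}=\omega(1)$. To conclude via Chebyshev's inequality, the plan is to show $\var{Z}=o(\E{Z}^2)$, mirroring the structure of the proof of the positive part of \cref{gnp:trace:subgraphs}: the diagonal contribution is bounded by $\E{Z}$; for vertex-disjoint pairs one establishes a $K_n$-analog of \cref{gnp:nonintersecting}; and for vertex-intersecting pairs one groups the contributions by the isomorphism type of the union $T^* = T_1\cup T_2$ and applies \cref{key_lemma} to $T^*$.

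The main obstacle is this last step. For each intersecting union type $T^*$ with $k^*=2k-k'$ vertices, $\ell^*=2\ell-\ell'$ edges, and $\td(T^*)=\rho^*$, one must show that $n^{k^*}\pr{T^*\subseteq\Gamma_t}=o(\E{Z}^2)$. Substituting the estimates and using $k-\ell=1$, this reduces to checking
\begin{equation*}
  n^{\ell'-k'}\,(t/n)^{r-2\rho} = o(1)\qquad\text{for every }r\in[\rho^*,\ell^*].
\end{equation*}
The key structural input is that the intersection $T_1\cap T_2$ is a subforest of the tree $T_1$, so whenever the two copies share a vertex one has $k'-\ell'\ge 1$, with the finer accounting $k'-\ell' = c'' + s$, where $c''$ is the number of components of the shared edge-forest and $s$ is the number of isolated shared vertices. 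Combining this with the lower bound $t\gg n^{1-1/\rho}$ and performing a case analysis on whether $t\le n$ or $t>n$ and on the sign of $r-2\rho$, while tracking how $\rho^*$ compares to $\rho$ and $2\rho$, should yield the desired bound. Unlike the $\gnp$ setting of \cref{gnp:trace:subgraphs}, where $t\gg n$ throughout, here the subthreshold regime $t\ll n$ (relevant whenever $\rho\ge 2$) must be handled, and this is what makes the second moment step more delicate.
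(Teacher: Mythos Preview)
Your negative part matches the paper. For the positive part, your outline has the right shape (second moment, split into diagonal / vertex-disjoint / intersecting), but there are two genuine gaps in the regime $n^{1-1/\rho}\ll t\ll n$ (which is the relevant one whenever $\rho\ge 2$).

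\textbf{Vertex-disjoint pairs.} You propose to ``establish a $K_n$-analog of \cref{gnp:nonintersecting}''. That lemma's proof relies on the \emph{sharp} asymptotic $\pr{H_0\subseteq\Gamma_t}\sim(2t/(n^2p))^\ell$ from the ``moreover'' clause of \cref{key_lemma}, available only when $t\gg n$. In the sublinear regime the key lemma gives only a $\Theta(\cdot)$ estimate, so you get $\pr{A_1\wedge A_2}=\Theta(X)$ and $\pr{A_1}\pr{A_2}=\Theta(X)$ with the same $X$; this says nothing about the sign or smallness of $\cov(Z_1,Z_2)$. Since there are $\Theta(n^{2k})$ disjoint pairs and $X=\Theta(n^{-2k}\E{Z}^2)$, you need $\cov(Z_1,Z_2)=o(X)$, which you cannot extract from $\Theta$-level bounds. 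The paper sidesteps this entirely: on $K_n$ the walk positions $X_0,\ldots,X_t$ are i.i.d., and a correlation inequality of McDiarmid then gives $\cov(Z_1,Z_2)\le 0$ directly for vertex-disjoint $H_1,H_2$.

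\textbf{Intersecting pairs.} Your reduction to $n^{\ell'-k'}(t/n)^{r-2\rho}=o(1)$ is correct, and for $t\ll n$ the worst term is $r=\rho^*$. You note $k'-\ell'\ge 1$, but this alone is not enough: if the intersection is a tree (so $k'-\ell'=1$) and one had $\rho^*<\rho$, then at $t=n^{1-1/\rho+o(1)}$ the ratio would be $n^{-1}\cdot n^{(2\rho-\rho^*)/\rho+o(1)}=n^{1-\rho^*/\rho+o(1)}\to\infty$. The missing structural fact is that when $T_1\cap T_2$ is a tree, $T_1\cup T_2$ is again a tree containing $T$, and $\td$ is monotone under subtrees, hence $\rho^*\ge\rho$. (Monotonicity of $\td$ fails for general subgraphs, so this genuinely uses the tree structure.) With this in hand, the paper's inequality $k'-\ell'-2+\rho^*/\rho\ge 0$ closes the case $\rho^*<2\rho$, and the case $\rho^*\ge 2\rho$ is immediate.
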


The following lemma shows that \cref{trace:subtrees,trace:subtrees2} are in 
fact equivalent.
\begin{lemma}\label{trail:odd}
  For every connected $G$, $\td(G) = \max\left\{\odd(G)/2,1\right\}$.
\end{lemma}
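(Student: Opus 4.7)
The plan is to prove the claimed identity by establishing matching upper and lower bounds on $\td(G)$.

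For the lower bound $\td(G)\ge\max\{\odd(G)/2,1\}$, I would use a parity argument. Suppose $E(G)$ is partitioned into edge-disjoint trails $T_1,\ldots,T_{\td}$. Within any single trail, any vertex that is not an endpoint has even degree (one edge in, one edge out, per visit), and a closed trail has no vertex of odd trail-degree; an open trail contributes odd degree to exactly its two endpoints. Consequently, a vertex $v$ has odd degree in $G$ if and only if it serves as an endpoint of an odd number of (necessarily open) trails in the decomposition, so $\odd(G)\le 2\cdot(\text{number of open trails})\le 2\td(G)$. Combined with the trivial inequality $\td(G)\ge 1$ (valid whenever $E(G)\ne\emptyset$, which holds for any connected $G$ with at least two vertices, the only case in which the lemma is used), this gives the lower bound.

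For the upper bound, I would split into the cases $\odd(G)=0$ and $\odd(G)=2k\ge 2$. In the former case, $G$ is a connected graph in which every vertex has even degree, so by Euler's theorem it admits a closed Eulerian trail, hence $\td(G)\le 1$. In the latter case, pair the odd-degree vertices arbitrarily as $(u_1,v_1),\ldots,(u_k,v_k)$ and form the multigraph $G'$ by adding a (possibly parallel) edge $e_i=\{u_i,v_i\}$ for each $i\in[k]$. The multigraph $G'$ is connected with all even degrees, hence admits an Eulerian circuit $C$ by the multigraph version of Euler's theorem. Deleting the $k$ added edges $e_1,\ldots,e_k$ from the cyclic edge-sequence of $C$ partitions the remaining edges (which are precisely $E(G)$) into at most $k$ arcs, each of which is a trail in $G$. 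Therefore $\td(G)\le k=\odd(G)/2$.

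I do not anticipate any significant obstacle. The only technical ingredients are the standard (multigraph) version of Euler's theorem and the elementary observation that removing $k$ edges from a closed Eulerian edge-sequence yields at most $k$ arcs, each of which is a trail. Both parts combine to give exactly $\td(G)=\max\{\odd(G)/2,1\}$.
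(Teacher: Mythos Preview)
Your proposal is correct and follows essentially the same approach as the paper: the lower bound via the observation that each trail has at most two odd-degree vertices (the paper phrases this as ``removing a trail decreases $\odd(G)$ by at most $2$''), and the upper bound by adding $k$ matching edges between the odd-degree vertices, taking an Eulerian circuit, and deleting the added edges. Your explicit mention that the added edges may be parallel (so one should invoke the multigraph version of Euler's theorem) is a nice clarification that the paper leaves implicit.
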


\begin{proof}
  If $\odd(G)=0$ then $G$ is Eulerian, thus $\td(G)=1$.  Otherwise, let
  $\odd(G)=2k$, and let $v_1,v_2,\ldots,v_{2k}$ be the odd degree vertices.  
  Create $G'$ by adding the edges $\left\{v_{2i-1},v_{2i}\right\}$.  $G'$ is
  Eulerian; consider a tour (closed trail) $T$ in $G'$, and remove the added
  edges from that tour.  That creates exactly $k$ trails which make a
  partition of $E(G)$, thus $\td(G)\le k$.  On the other hand, every trail
  removed from $E(G)$ decreases $\odd(G)$ by at most $2$, hence $\td(G)\ge
  k$.
\end{proof}

\subsection{Proof of \texorpdfstring{\cref{trace:subtrees2}}
  {Theorem \ref{trace:subtrees2}}}
\label{proof:trace:subtrees2}
Throughout this section $T$ is a fixed non-empty tree with $k$ vertices,
$\ell=k-1$ edges and $\td(T)=\td$.

\subsubsection{Proof of the negative part}
\label{proof:trace:subtrees2:negative}
Assume $1\ll t\ll n^{1-1/\td}$.  Let $Z$ count the number of copies of $T$ in
$\Gamma_t$.  According to \cref{key_corollary},
\begin{equation*}
  \E{Z} = \Theta\left(
  n\sum_{r=\td}^{k-1}\left(\frac{t}{n}\right)^r \right).
\end{equation*}
Since $t\ll n$, we have that
\begin{equation*}
  \E{Z} = \Theta\left(n\left(\frac{t}{n}\right)^{\td}\right)
  = \Theta\left(n^{1-\td} t^{\td}\right)
  = o\left(n^{1-\td} n^{\td-1}\right)=o(1).
\end{equation*}
Markov's inequality then yields the result.\qed

\subsubsection{Proof of the positive part}
\label{proof:trace:subtrees2:positive}
We will need a couple of lemmas in order to prove the positive part of the
theorem.

\begin{lemma}\label{subtree}
  Let $T_1\subseteq T_2$ be two trees.  Then $\td(T_1)\le\td(T_2)$.
\end{lemma}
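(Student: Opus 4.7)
The plan is to apply \cref{trail:odd}: since both $T_1$ and $T_2$ are trees (hence connected), it gives $\td(T_i) = \max\{\odd(T_i)/2, 1\}$, so it suffices to establish the monotonicity $\odd(T_1) \le \odd(T_2)$. Since for trees $|E| = |V| - 1$, one has $|V(T_2) \setminus V(T_1)| = |E(T_2) \setminus E(T_1)|$, and I would prove the monotonicity by induction on this common quantity, with the base case $T_1 = T_2$ being trivial.

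For the inductive step the key structural fact I need is that whenever $T_1 \subsetneq T_2$ are trees, the larger tree has a leaf outside $V(T_1)$. To see this, I would take any vertex $w \in V(T_2) \setminus V(T_1)$ and let $C$ be the connected component of $w$ in the forest obtained from $T_2$ by deleting the vertices of $T_1$. Since $T_2$ is connected, $C$ is joined to $V(T_1)$ by at least one edge; since $T_2$ is acyclic, by exactly one such edge $\{u,c\}$ with $u \in V(T_1)$. Then $T_2[V(C) \cup \{u\}]$ is a tree in which $u$ has degree $1$, and any leaf of it other than $u$ has all of its $T_2$-neighbours inside $V(C) \cup \{u\}$, so it is a leaf of $T_2$ not lying in $V(T_1)$.

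Given such a leaf $v$ with unique $T_2$-neighbour $u$, set $T_2' = T_2 - v$; this is a tree satisfying $T_1 \subseteq T_2'$, so by induction $\odd(T_1) \le \odd(T_2')$. Finally, I would compare $\odd(T_2)$ and $\odd(T_2')$: passing from $T_2'$ to $T_2$ adjoins the pendant edge $\{u,v\}$, making $v$ odd (degree $1$) and toggling the parity of $d(u)$. A two-case parity check depending on whether $d_{T_2'}(u)$ is odd or even yields $\odd(T_2) \in \{\odd(T_2'), \odd(T_2') + 2\}$, hence $\odd(T_2') \le \odd(T_2)$; chaining gives $\odd(T_1) \le \odd(T_2)$ and, via \cref{trail:odd}, the claimed $\td(T_1) \le \td(T_2)$.

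The only step requiring any care is the production of a leaf of $T_2$ outside $V(T_1)$; everything else is routine parity bookkeeping combined with the odd-degree characterisation of $\td$ provided by \cref{trail:odd}.
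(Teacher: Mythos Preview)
Your proof is correct and coincides with the paper's \emph{alternative} proof: reduce via \cref{trail:odd} to the monotonicity $\odd(T_1)\le\odd(T_2)$, then verify this by growing $T_1$ to $T_2$ one edge at a time (you phrase it as shrinking $T_2$ by removing a leaf outside $V(T_1)$, which is the same induction run backwards, with more care taken in locating the leaf).

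The paper's primary proof takes a different, more direct route that bypasses \cref{trail:odd} entirely: since $T_2$ is a tree, any trail in $T_2$ is a path, and the restriction of a path in $T_2$ to $E(T_1)$ must again be a single path---otherwise two vertices in distinct components of the restriction would be joined by two distinct paths in $T_2$, one along the original path and one through $T_1$. Hence any trail decomposition of $T_2$ with $\td(T_2)$ parts restricts to a trail decomposition of $T_1$ with at most $\td(T_2)$ parts.
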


\begin{note}
The above lemma does not hold for $T_1,T_2$ which are not trees.
For example, the star $S_3$ with three leaves has $\td(S_3)=2$, but if
$G=S_3+e$ for any edge $e$ in the complement of $S_3$, then $\td(G)=1$.  
Similarly, the path $P_3$ of length $3$ has $\td(P_3)=1$, but $G=P_3-e$ where 
$e$ is the middle edge, is a forest with $\td(G)=2$.
\end{note}

\begin{proof}
  It suffices to show that every trail in $T_2$, restricted to the edges of
  $T_1$, is a trail in $T_1$.  Let $P$ be a trail in $T_2$.  Since $T_2$ is a
  tree, $P$ is a path.  Suppose to the contrary that the restriction of $P$
  to the edges of $T_1$, $P'$, is not a path.  Thus, it must have at least
  two connected components.  Let $u_1$ and $v_1$ be two vertices of $P'$
  which belong to two distinct connected components.  Thus in $T_2$ there are
  two distinct paths from $u_1$ to $v_1$, one which passes through $P$ and
  one which passes through $T_1$, in contradiction to the fact that $T_2$ is
  a tree.
\end{proof}

\begin{proof}[Alternative proof]
  In view of \cref{trail:odd} it suffices to show that
  $\odd(T_1)\le\odd(T_2)$, and this can be verified by starting with $T_1$
  and incrementally adding edges until reaching $T_2$, showing that each
  addition of an edge may not decrease the number of odd degree vertices.
\end{proof}

\begin{lemma}\label{ineq:tree}
  Let $T_1,T_2$ be two intersecting labelled copies of $T$ in $K_n$.  Let
  $k'$ and $\ell'$ denote the number of vertices and edges, respectively,
  of the intersection $T_1\cap T_2$, and let $\hat\td=\td(T_1\cup T_2)$.  Then
  \begin{equation*}
    k' - \ell' -2 + \hat\td/\td \ge 0.
  \end{equation*}
\end{lemma}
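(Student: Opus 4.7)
The plan is to analyze the intersection $T_1\cap T_2$, which, being a subgraph of the tree $T_1$, is itself a forest on $k'$ vertices and $\ell'$ edges. In particular it has exactly $c:=k'-\ell'$ connected components, and $c\ge 1$ since $T_1\sim T_2$. I would then split on the value of $c$.

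The easy case is $c\ge 2$, i.e., $k'-\ell'\ge 2$. Here $k'-\ell'-2\ge 0$, and since $\td\ge 1$ and $\hat\td\ge 1$ we have $\hat\td/\td>0$, so the desired inequality is immediate.

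The main case is $c=1$, so the intersection is a single subtree with $k'=\ell'+1$ vertices. Here it suffices to show $\hat\td\ge\td$. The key observation I would use is that, under $c=1$, the union $T_1\cup T_2$ is itself a tree. Indeed, it is connected (both $T_i$ are connected and meet in the shared subtree), and a direct count gives
\begin{equation*}
  |V(T_1\cup T_2)|=2k-k',\qquad
  |E(T_1\cup T_2)|=2\ell-\ell'=2(k-1)-(k'-1)=|V(T_1\cup T_2)|-1,
\end{equation*}
so it satisfies the tree identity. Granted this, I would apply \cref{subtree} to the containment of trees $T_1\subseteq T_1\cup T_2$, obtaining $\td=\td(T_1)\le\td(T_1\cup T_2)=\hat\td$, hence $\hat\td/\td\ge 1$ and $k'-\ell'-2+\hat\td/\td\ge 1-2+1=0$, as required.

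I do not anticipate a real technical obstacle; the whole argument is driven by the elementary counting identity that shows the union of two trees intersecting in a connected subgraph is again a tree, together with the already-established monotonicity of $\td$ on nested trees (\cref{subtree}). The only place where one needs to be slightly careful is in noting that $c=0$ cannot occur (since $T_1\sim T_2$ forces $k'\ge 1$ and a forest on $k'\ge 1$ vertices has at least one component), so that the two cases above are exhaustive.
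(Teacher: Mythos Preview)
Your proof is correct and follows essentially the same approach as the paper: split on whether the forest $T_1\cap T_2$ is a tree, and in the tree case show $T_1\cup T_2$ is itself a tree via the vertex/edge count and then invoke \cref{subtree}. The only cosmetic difference is that you apply \cref{subtree} to $T_1\subseteq T_1\cup T_2$ rather than to $T\subseteq T_1\cup T_2$, which is immaterial since $T_1$ is a copy of $T$.
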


\begin{proof}
  Observe that $T_1\cap T_2$ is a forest.  If it is not a tree, then
  $k'-\ell' \ge 2$ and the claim follows.  Consider now the case where 
  $T_1\cap T_2$ is a tree.  In that case, $k'-\ell'=1$, thus it suffices to 
  show that $\hat\td\ge\td$.  Note that in that case it also follows that $T_1\cup
  T_2$ is a tree, since it is connected with $2k-k'$ vertices and
  $2\ell-\ell'$ edges, and
  \begin{equation*}
    (2k-k') - (2\ell-\ell') = 2(k-\ell) - (k'-\ell') = 1.
  \end{equation*}
  It follows that $T$ is a subtree of $T_1\cup T_2$, thus by \cref{subtree},
  $\td\le\hat\td$.
\end{proof}

In what follows, assume $n^{1-1/\td}\ll t$.  We also assume without loss of
generality that $t\ll n$.  The following lemma is the equivalent of 
\cref{gnp:expectation} for the case of traversing trees.

\begin{lemma}\label{tree:expectation}
  Let $T_1,T_2$ be two intersecting labelled copies of $T$ in $K_n$, and let
  $H=T_1\cup T_2$.  Let $Z,Z^*$ count the number of appearances of a copy of 
  $T,T^*$ in $\Gamma_t$, respectively.  Then
  \begin{equation*}
    \E{Z^*} \ll \E^2{Z}.
  \end{equation*}
\end{lemma}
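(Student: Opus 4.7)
The plan is to apply \cref{key_corollary} with base graph $K_n$---formally $p=1$, which satisfies the hypothesis $p\ge n^{-1/\nu+\varepsilon}$ trivially and makes the ``\whp{} over $G$'' clause vacuous---to both $\E{Z}$ and $\E{Z^*}$. Since the $p$ factors cancel in the conclusion of the corollary, the formula depends only on $n$ and $t$. As $t\ll n$, every sum of the form $\sum_{r}(t/n)^r$ appearing in the corollary is dominated by its smallest term, so, using $\ell=k-1$,
\begin{equation*}
  \E{Z}=\Theta\left(n^{1-\td}\,t^{\td}\right),\qquad \E^2{Z}=\Theta\left(n^{2-2\td}\,t^{2\td}\right).
\end{equation*}
Writing $k'$ and $\ell'$ for the numbers of vertices and edges of $T_1\cap T_2$ and $\hat\td=\td(H)$, the graph $H$ has $2k-k'$ vertices and $2\ell-\ell'$ edges, and the same argument yields
\begin{equation*}
  \E{Z^*}=\Theta\left(n^{2-k'+\ell'-\hat\td}\,t^{\hat\td}\right).
\end{equation*}

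Forming the ratio, the target inequality $\E{Z^*}\ll\E^2{Z}$ reduces to the single exponent inequality
\begin{equation*}
  \left(t/n\right)^{\hat\td-2\td}\ll n^{k'-\ell'}.
\end{equation*}
I plan to close this by a short case split on the sign of $\hat\td-2\td$. If $\hat\td\ge 2\td$, the left-hand side is at most $1$ (and is $o(1)$ when the inequality is strict, since $t\ll n$), while $k'-\ell'\ge 1$ because $T_1\cap T_2$ is a non-empty forest, so the right-hand side tends to infinity. If $\hat\td<2\td$, I invoke the assumption $t\gg n^{1-1/\td}$ (equivalently, $n/t\ll n^{1/\td}$) and raise to the positive power $2\td-\hat\td$ to obtain
\begin{equation*}
  \left(n/t\right)^{2\td-\hat\td}\ll n^{(2\td-\hat\td)/\td}=n^{2-\hat\td/\td}\le n^{k'-\ell'},
\end{equation*}
where the last inequality is precisely \cref{ineq:tree}.

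The structural content lives in \cref{ineq:tree}, which has already been established; what remains is bookkeeping of exponents together with the case split above. The only subtle point is the borderline $\hat\td=2\td$, where the left-hand side is $\Theta(1)$ rather than $o(1)$, so the argument genuinely requires the strict bound $k'-\ell'\ge 1$ coming from the intersection being a non-empty forest, rather than merely $k'-\ell'\ge 0$.
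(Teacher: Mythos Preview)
Your proof is correct and follows essentially the same route as the paper's: apply \cref{key_corollary} (with $p=1$) to extract the leading terms $\E{Z}=\Theta\left(n^{1-\td}t^{\td}\right)$ and $\E{Z^*}=\Theta\left(n^{2-k'+\ell'-\hat\td}t^{\hat\td}\right)$, form the ratio, and split on whether $\hat\td\ge 2\td$ or $\hat\td<2\td$, invoking \cref{ineq:tree} in the latter case. The paper writes the ratio as $\Theta\left(n^{k'-\ell'+\hat\td-2\td}t^{2\td-\hat\td}\right)$ rather than reducing to $(t/n)^{\hat\td-2\td}\ll n^{k'-\ell'}$, but the two formulations are equivalent and the case analysis is identical.
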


\begin{proof}
  According to \cref{key_corollary} and since $n^{-1/\td}\ll t/n\ll 1$, we have 
  that
  \begin{equation*}
    \E{Z} = \Theta\left(n\left(\frac{t}{n}\right)^\td\right) = \omega(1),
  \end{equation*}
  and thus
  \begin{equation*}
    \E^2{Z} = \Theta\left(n^{2-2\td}t^{2\td}\right).
  \end{equation*}
  Write $\hat\td=\td(H)$.  Let $k',\ell'$ be the number of vertices and edges 
  of the intersection $T_1\cap T_2$, respectively.  Since $T_1\cap T_2$ is a 
  non-empty forest, $k'>\ell'$.  From \cref{key_corollary}, and since
  $t/n\ll 1$, we have that
  \begin{equation*}
    \E{Z^*} = \Theta\left(
    n^{2k-k'-(2\ell-\ell')}\left(\frac{t}{n}\right)^{\hat\td}\right)
    = \Theta\left(n^{2+\ell'-k'-\hat\td} t^{\hat\td}\right).
  \end{equation*}
  Now, if $\hat\td\ge 2\td$, then $(t/n)^{2\td-\hat\td} = \Omega(1)$ and
  \begin{equation*}
    \frac{\E^2{Z}}{\E{Y}}
    = \Theta\left(n^{k'-\ell'+\hat\td-2\td} t^{2\td-\hat\td}\right)
    = \Omega\left(n^{k'-\ell'}\right) = \omega(1).
  \end{equation*}
  On the other hand, if $\hat\td<2\td$, then $(t/n)^{2\td-\hat\td}\gg 
  n^{\hat\td/\td-2}$ and
  \begin{equation*}
    \frac{\E^2{Z}}{\E{Y}}
    = \Theta\left(n^{k'-\ell'+\hat\td-2\td} t^{2\td-\hat\td}\right)
    = \omega\left(n^{k'-\ell'-2+\hat\td/\td}\right),
  \end{equation*}
  and it follows from \cref{ineq:tree} that the last term is
  $\omega(1)$.
\end{proof}

Our next goal is to show that the events of the appearances of two 
vertex-disjoint graphs in the trace are not positively correlated.  To that aim,
we use a correlation inequality proved in \cite{Mc92}.
For finite non-empty sets $T$ and $V$, say that a collection $\mathcal{F}$ of
families $(W_v)_{v\in V}$ of subsets of $T$ is \emph{decreasing} if for every 
family $(W_v)_{v\in V} \in \mathcal{F}$, if $(W'_v)_{v\in V}$ satisfies 
$W'_v\subseteq W_v$ for every $v\in V$, then $(W'_v)_{v\in V} \in \mathcal{F}$.

\begin{lemma}[\cite{Mc92}*{Section 2}]\label{mcdiarmid}
  Let $T$ and $I$ be finite non-empty sets.  Let $I$ be partitioned into two
  non-empty sets $J$ and $K$.  Let $\mathcal{F}$ be a decreasing collection of
  families $(W_v)_{v\in J}$ and let $\mathcal{G}$ be a decreasing collection of
  families $(W_v)_{v\in K}$.  Let $(x_j)_{j\in T}$ be a family of independent
  random variables, each taking values in some set containing $I$, and, for each
  $v\in I$, let ${S_v=\{j\in T\mid x_j=v\}}$.  Let $F$ be the event
  ``$(S_v)_{v\in J}\in\mathcal{F}$'' and let $G$ be the event
  ``$(S_v)_{v\in K}\in\mathcal{G}$''.  In these settings,
  \begin{equation*}
    \pr{F\land G} \le \pr{F}\pr{G}.
  \end{equation*}
\end{lemma}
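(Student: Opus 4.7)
The plan is to recast the two events as monotone functions of a single family of independent random variables on a totally ordered state space, and then invoke the Harris (FKG for product measures) correlation inequality. The first step is to replace each $x_j$ by a coarser random variable $Z_j$ taking values in the three-element set $\{1,0,2\}$, defined by $Z_j = 1$ if $x_j \in J$, $Z_j = 2$ if $x_j \in K$, and $Z_j = 0$ otherwise. Since each $Z_j$ is a deterministic function of $x_j$ alone, the family $(Z_j)_{j\in T}$ is independent; its joint values encode exactly the (random) decomposition of $T$ into the disjoint sets $T_J = \{j : Z_j = 1\}$, $T_K = \{j : Z_j = 2\}$ and $T_* = \{j : Z_j = 0\}$.

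The second step is to condition on the whole family $(Z_j)_{j\in T}$. Given this data, $(S_v)_{v\in J}$ is a random partition of $T_J$ driven only by $(x_j)_{j\in T_J}$ (each drawn from its conditional distribution on $J$), while $(S_v)_{v\in K}$ is a random partition of $T_K$ driven only by $(x_j)_{j\in T_K}$. Because $T_J\cap T_K = \emptyset$ and the $x_j$'s remain conditionally independent, these two tuples are conditionally independent, and hence so are the events $F$ and $G$. Writing
\begin{equation*}
    f\bigl((Z_j)_{j\in T}\bigr) := \pr{F \mid (Z_j)_{j\in T}}, \qquad
    g\bigl((Z_j)_{j\in T}\bigr) := \pr{G \mid (Z_j)_{j\in T}},
\end{equation*}
the target inequality $\pr{F\land G}\le\pr{F}\pr{G}$ becomes $\E{fg}\le\E{f}\,\E{g}$.

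The final step is to check monotonicity and invoke Harris' inequality. I would endow each copy of $\{1,0,2\}$ with the total order $1 < 0 < 2$. Since $f$ depends on $(Z_j)$ only through $T_J$ and $\mathcal{F}$ is decreasing, removing an element from $T_J$ can only increase $f$; equivalently, moving any $Z_j$ upward in the chosen order (from $1$ to $0$ or to $2$) weakly increases $f$. By a symmetric argument, moving any $Z_j$ upward to $2$ enlarges $T_K$ and weakly decreases $g$. Thus $f$ is coordinatewise non-decreasing while $g$ is coordinatewise non-increasing on a product of totally ordered spaces carrying a product measure, and Harris' inequality delivers $\E{fg} \le \E{f}\,\E{g}$, as required. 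The one non-obvious point is that a single total order must simultaneously witness the monotonicity of both $f$ and $g$; this works precisely because $f$ is insensitive to swapping $0$ and $2$ while $g$ is insensitive to swapping $0$ and $1$, so placing $0$ between the two ``extreme'' values reconciles both requirements.
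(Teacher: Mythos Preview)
The paper does not prove this lemma; it simply quotes it from McDiarmid~\cite{Mc92} and uses it as a black box. So there is no in-paper proof to compare against.

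Your argument is correct. The reduction to the coarse variables $Z_j\in\{1,0,2\}$ is the right move: it makes $F$ and $G$ conditionally independent (they depend on disjoint subfamilies of the $x_j$'s once the partition $T=T_J\cup T_K\cup T_*$ is fixed), turning the problem into $\E{fg}\le\E{f}\,\E{g}$ for $f=\pr{F\mid Z}$ and $g=\pr{G\mid Z}$. The monotonicity check is fine: the coupling that deletes $j_0$ from every $S_v$ when $j_0$ is removed from $T_J$ shows, via the decreasing property of $\mathcal{F}$, that $f$ can only go up when $T_J$ shrinks, and symmetrically for $g$; your choice of the order $1<0<2$ is exactly what is needed to make $f$ non-decreasing and $g$ non-increasing simultaneously, after which Harris' inequality on a product of chains applies. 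This is, in spirit, the argument McDiarmid gives (his Lemma~2.1 is proved by conditioning on the ``colour classes'' and then invoking an FKG-type inequality), so you have essentially reconstructed the intended proof.
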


\begin{corollary}\label{tree:nonintersecting}
  Let $H_1,H_2$ be two vertex-disjoint subgraphs of $K_n$.  For $i\in[2]$, let 
  $A_i$ be the event ``$H_i\subseteq \Gamma_t$''.  Then $A_1,A_2$ are not 
  positively correlated.
\end{corollary}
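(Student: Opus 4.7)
The plan is to apply the correlation inequality (\cref{mcdiarmid}) to the complementary events $\neg A_1$ and $\neg A_2$. If we establish $\pr{\neg A_1 \land \neg A_2} \le \pr{\neg A_1}\pr{\neg A_2}$, then expanding both sides by inclusion--exclusion yields $\pr{A_1 \land A_2} \le \pr{A_1}\pr{A_2}$, which is precisely the non-positive correlation claim. The two assertions we must verify in order to feed the lemma are an i.i.d.\ structure for the driving randomness and a monotonicity property of $\neg A_i$ in the occupation sets.

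First I would exploit the very special structure of the walk on $K_n$: since $N^+(v)=[n]$ for every vertex $v$, the lazy simple random walk $X_0,X_1,\ldots,X_t$ is simply a sequence of $t+1$ i.i.d.\ uniform samples from $[n]$, which matches the independence hypothesis of \cref{mcdiarmid} verbatim. I would then set $T=\{0,1,\ldots,t\}$, $x_j=X_j$, and $S_v=\{j\in T:X_j=v\}$ for each $v\in[n]$; and take $I=V(H_1)\cup V(H_2)$, partitioned into $J=V(H_1)$ and $K=V(H_2)$, which are disjoint by the vertex-disjointness hypothesis.

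The heart of the argument is the monotonicity check. The key observation is that an edge $\{u,w\}$ lies in $\Gamma_t$ if and only if there exist $a\in S_u$ and $b\in S_w$ with $|a-b|=1$. Consequently $A_i$ depends only on $(S_v)_{v\in V(H_i)}$, and it is monotone increasing in each of these sets, since enlarging an occupation set can only add traversed edges to the trace, never remove them. Therefore $\neg A_i$ is monotone decreasing in $(S_v)_{v\in V(H_i)}$, and the collection $\mathcal{F}$ of families $(W_v)_{v\in J}$ whose induced traversed edges fail to cover $H_1$, together with the analogous collection $\mathcal{G}$ for $H_2$, are decreasing in the sense demanded by \cref{mcdiarmid}.

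Invoking \cref{mcdiarmid} then delivers the inequality for the complementary events, and a one-line rearrangement completes the argument. I do not anticipate any substantial obstacle: once one recognises that the walk on $K_n$ is an i.i.d.\ uniform sequence and that vertex-disjoint subgraphs correspond to events measurable with respect to occupation sets indexed by disjoint subsets $J,K$ of $I$, the monotonicity is immediate and \cref{mcdiarmid} does all of the real work.
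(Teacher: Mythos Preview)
Your proposal is correct and follows essentially the same approach as the paper: reduce to the complementary events, use the i.i.d.\ structure of the lazy walk on $K_n$ to write $B_i=\neg A_i$ as a decreasing event in the occupation sets $(S_v)_{v\in V(H_i)}$, and apply \cref{mcdiarmid} with $J=V(H_1)$, $K=V(H_2)$. The paper phrases the decreasing property via the notion of a family ``missing an edge'' of $H_i$, but this is exactly your observation that $\{u,w\}\in\Gamma_t$ iff some $a\in S_u$, $b\in S_w$ satisfy $|a-b|=1$, so enlarging occupation sets can only add traversed edges.
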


\begin{proof}
  It is easy to verify that if two events are not positively correlated then so 
  are their complements.  It therefore suffices to prove that the complements 
  $B_1,B_2$ of $A_1,A_2$ are not positively correlated. 
  For $i\in[2]$ let $H_i=(V_i,E_i)$.
  We say that a family $(W_v)_{v\in V_i}$ of sets of times in 
  $\{0,1,\ldots,t\}$ \emph{misses an edge} $\{u,v\}\in E_i$ if there is no 
  $j\in[t]$ such that either $j-1\in W_u$ and $j\in W_v$ or $j-1\in W_v$ and 
  $j\in W_u$.
  Let $\mathcal{F},\mathcal{G}$ be the collections of all families of sets of 
  times which miss at least one edge from $E_1,E_2$, respectively, and observe 
  that $\mathcal{F},\mathcal{G}$ are decreasing.
  
  For $v\in V$, let $S_v$ be the (random) set of times at which the walk
  was located at $v$.  We can now write $B_1,B_2$ as the events
  ``$(S_v)_{v\in V_i}\in\mathcal{F}$'', ``$(S_v)_{v\in V_2}\in\mathcal{G}$'',
  respectively.
  Since $X_0,\ldots,X_t$ are independent, it follows from \cref{mcdiarmid}
  (with $J=V_1$, $K=V_2$, $T=\{0,\ldots,t\}$ and $x_j=X_j$)
  that
  $\pr{B_1\land B_2} \le \pr{B_1}\pr{B_2}$.
\end{proof}

\begin{proof}[Proof of the positive part of \cref{trace:subtrees2}]
  Recall that $n^{1-1/\td} \ll t\ll n$.
  Let $Z$ count the number of copies of $T$ in $\Gamma_t$.  Recall (e.g.\ 
  from the proof of \cref{tree:expectation}) that
  \begin{equation*}
    \E{Z} = \Theta\left(n\left(\frac{t}{n}\right)^\td\right) = \omega(1).
  \end{equation*}

  Let $\mathcal{T}=\left\{T_1,T_2,\ldots,T_y\right\}$ be the set of all
  copies of $T$ in $K_n$, let $Z_i$ be the indicator of the event
  ``$T_i\subseteq\Gamma_t$'', let $\mathcal{U}$ be
  the set of all possible unions of two intersecting (distinct) copies of
  $T$, and for $H\in\mathcal{U}$, let $Z_H$ be the random variable counting
  the number of copies of $H$ in $\Gamma_t$.  Write $i\sim j$ if $Z_i$ and
  $Z_j$ are positively correlated, and recall (from
  \cref{tree:nonintersecting}) that if $i\sim j$ then $T_i\sim T_j$ (that 
  is, $T_i,T_j$ intersect).  It follows that
  \begin{align*}
    \var{Z}
    &= \sum_{i=1}^y\sum_{j=1}^y\cov(Z_i,Z_j)\\
    &\le \sum_{i=1}^y \E{Z_i}
    + \sum_{i\sim j}\pr{T_i\cup T_j\subseteq \Gamma_t}\\
    &\le \E{Z} + \sum_{T_i\sim T_j}\pr{T_i\cup T_j\subseteq\Gamma_t}
    = \E{Z} + 2\sum_{H\in\mathcal{U}}\E{Z_H}.
  \end{align*}
  Since $|\mathcal{U}|=O(1)$, it follows from \cref{tree:expectation} that
  $\var{Z} = o\left(\E^2{Z}\right)$, and thus from Chebyshev's inequality
  it follows that $Z>0$ \whp{}.
\end{proof}

\subsection{Proof of \texorpdfstring{\cref{trace:subforests}}
  {Corollary \ref{trace:subforests}}}
\label{proof:trace:subforests}
Suppose first that $t\ll n^{1-2/\theta}$.  Let $i\in[z]$ such that
$\odd(T_i)=\theta$.  By \cref{trace:subtrees}, \whp{} $T_i$ is not a
subgraph of $\Gamma_t$, and hence $F$ is not a subgraph of $\Gamma_t$.

Now suppose that $t\gg n^{1-2/\theta}$.  We assume without loss of generality 
that $t\ll n$.
Let $s=\floor{t/z}$, and for $i\in[z]$ let $\Gamma_i$ be the trace restricted 
to the times $[(i-1)s,is-1)$.  For $i\in[z]$, let $A_i$ be the event 
``$T_i\subseteq \Gamma_i$'', and let $T_i'$ be the first copy of $T_i$ in 
$\Gamma_i$ (if there exists one; let it be an arbitrary tree otherwise).  Note 
that the events $A_i$ are mutually independent.  Let
\begin{equation*}
  U_i=\bigcup_{1\le j<i}V\left(T_j'\right),
\end{equation*}
let $B_i$ be the event that an edge from $\Gamma_i$ intersects $U_i$, and let 
$C_i=A_i\wedge \overline{B_i}$.  Observe that for $U\subseteq[n]$ with 
$|U|=O(1)$, the probability that an edge from $\Gamma_i$ intersects $U$ is 
$O(s|U|/n) = o(1)$.  It follows, using \cref{trace:subtrees}, that conditioning
on $C_1,\ldots,C_{i-1}$, the probability of $C_i$ is $1-o(1)$, and therefore,
\whp{}, the trace contains vertex-disjoint copies $T_1',\ldots,T_z'$ of 
$T_1,\ldots,T_z$, hence it contains a copy of $F$.\qed

\section{Concluding remarks and open problems}\label{section:concluding}
Our results give another confirmation to the assertion that random walks which
are long enough to typically cover a random graph, which is itself dense enough 
to be typically connected, leave a trace which ``behaves'' much like a random 
graph with a similar density.  On the other hand, at least on the complete 
graph, the results suggest that if the random walk is of sublinear length then 
it leaves a trace which is very different from a random graph with similar edge 
density.  In what other aspects do the two models differ?

In \cref{trace:subtrees} we have found, in particular, that a fixed path $P$ 
appears in the trace of a random walk on the complete graph \whp{} as long as 
$t\gg 1$.  In fact, it is not difficult 
to show that if $P$ is a path of length $\ell\ll\sqrt{n}$ and $t\ge \ell$, then 
$\Gamma_t$ contains a copy of $P$ \whp{}.  This is true since a random walk of 
length $t\ll\sqrt{n}$ typically does not intersect itself.  It may be 
interesting to find thresholds for the appearance of other ``large'' trees.  It
may also be interesting to find the threshold for the appearance of forests in
the trace of a random walk on a random graph.  Is it true, for example, that
if $p\ge n^{-1+\varepsilon}$ for some $\varepsilon>0$ then the thresholds are 
the same as in the case of $p=1$?  A slight variation in the proof of 
\cref{tree:expectation} works for random graphs as well, as long as
$\varepsilon\ge 1/\td$, but our use of \cref{mcdiarmid} already assumes that
the locations of the random walk are independent of each other.

Another possible direction would be to study the trace of the walk on other 
expander graphs, such as $(n,d,\lambda)$-graphs (see~\cite{KS06} for a survey), 
or on other random graphs, such as random regular graphs.  The small subgraph 
problem for random regular graphs of growing degree was settled by Kim, Sudakov and 
Vu~\cite{KSV07}.  They have shown that the degree threshold for the appearance 
of a copy of $H$ in a random regular graph is $n^{1-1/\md(H)}$, as long as $H$ 
contains a cycle.  Is it true that for ${d\ge n^{1-1/\md(H)+\varepsilon}}$, the 
time threshold for the appearance of $H$ in the trace of a random walk on a
random $d$-regular graph is also typically $n^{2-1/\md(H)}$, as in 
\cref{gnp:trace:subgraphs}?

\begin{acknowledgement}
  The authors wish to thank Alan Frieze, Asaf Nachmias and Yinon Spinka for 
  useful discussions, and two anonymous referees for their careful reading of 
  the paper and valuable comments and suggestions.
\end{acknowledgement}

\bibliography{library}
\end{document}